%% file: bem_boundary_value_correction_siam.tex
\documentclass{siamart250211}

\usepackage{amssymb}
\usepackage{booktabs}
\usepackage{mathtools}
\usepackage{microtype}
\usepackage{placeins}      

\graphicspath{{figures/}}

\newcommand{\tnorm}[1]{\lvert\!\lvert\!\lvert #1\rvert\!\rvert\!\rvert}

\newsiamthm{thm}{Theorem}
\newsiamthm{lem}{Lemma}
\newsiamthm{cor}{Corollary}
\newsiamthm{prop}{Proposition}
\newsiamremark{rem}{Remark}

\headers{Boundary Value Correction for the Boundary Element Method}
        {E. Burman, P. Hansbo, and M. G. Larson}

\title{Boundary Value Correction for the Boundary Element
  Method.\thanks{EB was supported in part by EPSRC grants EP/X042650/1 and
  EP/V050400/1.  PH was supported by the Swedish Research Council under
  grant 2022-03908.  MGL was supported in part by the Swedish Research
  Council under grants 2021-04925 and 2025-05562, the Knut and Alice
  Wallenberg Foundation under grant KAW 2025.0277, and the Swedish
  Research Programme Essence.}}

\author{Erik Burman\thanks{Department of Mathematics, University College
    London, Gower Street, London WC1E~6BT, UK
    (\email{e.burman@ucl.ac.uk}).}
  \and Peter Hansbo\thanks{Department of Mechanical Engineering,
    J\"onk\"oping University, SE-551\,11 J\"onk\"oping, Sweden
    (\email{peter.hansbo@ju.se}).}
  \and Mats G. Larson\thanks{Department of Mathematics and Mathematical
    Statistics, Ume{\aa}~University, SE-901\,87 Ume{\aa}, Sweden
    (\email{mats.larson@umu.se}).}}

\begin{document}

\maketitle

\begin{abstract}
  We develop a boundary element method for the Laplace equation with
  Dirichlet boundary conditions in a bounded three-dimensional domain. The
  method uses flat panels of maximum diameter \(h\) on an interior surrogate
  boundary and corrects the imposed boundary values by a first-order Taylor
  expansion along the panel normals. The correction uses the normal flux
  already present in the Calder\'on formulation and introduces no additional
  unknown field. Under uniform assumptions on the surrogate geometry and
  boundary operators, the penalised form is coercive when
  \(\gamma\delta_h\le c_0<4\), where \(\gamma\) is the penalty parameter and
  \(\delta_h\) is the maximum normal offset. The error estimate separates
  trace approximation from a geometry term of size
  \(\gamma^{1/2}\delta_h^2\|u\|_{H^3(\Omega)}\). With \(\gamma\simeq h^{-1}\),
  the sufficient condition \(\delta_h\lesssim h^{(k+1)/2}\) yields an error of
  order \(h^{k+1/2}\) for \(P_k/P_{k-1}\) Dirichlet/flux spaces under the
  stated regularity and uniform trace bounds. Thus an offset of order \(h^2\)
  supports the estimate through cubic Dirichlet approximation. We analyse two
  penalty variants and present numerical experiments on a sphere and an
  ellipsoid to examine convergence and sensitivity to geometry, penalty
  scaling and quadrature.
\end{abstract}

\begin{keywords}
  boundary element method, boundary value correction, surrogate boundary,
  Dirichlet problem, penalty method, Calder\'on formulation, a priori error
  estimates
\end{keywords}

\begin{AMS}
  65N38, 65N15, 65N12, 65R20
\end{AMS}

\input{introduction_v4}
\input{boundary_operators_v5}
\input{method_v5}
\input{analysis_v5}

\input{remark_reduced}
\input{numerical_experiments_v7}
\input{conclusion_v2}

\appendix
\input{technical_results_v5}
\section*{Code Availability}\label{sec:num-reproducibility}
The code and master script for reproducing all numerical examples are available at
\url{https://github.com/mglarson1/bem-boundary-value-correction}.

\section*{Use of artificial intelligence and computational tools}
During the preparation and revision of this manuscript, the authors used
OpenAI's ChatGPT and Codex for language editing, organisation of the
presentation, \LaTeX{} drafting, mathematical and algebraic checks, and
assistance with numerical code and its verification. Python, using NumPy,
SciPy, and Matplotlib, was used for numerical computations, independent
verification, and figure preparation. The authors take full responsibility
for the content of the manuscript, including the review and verification of
AI-assisted material.

\bibliographystyle{siamplain}
\bibliography{ref-bem}

\end{document}

%% file: introduction_v4.tex

\section{Introduction}
\label{sec:introduction}

\paragraph{Boundary Geometry and the Main Idea}\label{par:background-goal}
Boundary element methods (BEM) reduce a boundary value problem to equations on its boundary. High-order approximation on a curved domain requires an accurate representation of the boundary. Curved elements can provide this accuracy but require singular quadrature on curved panels \cite{Nedo76,SauterSchwab2011,MAH22,MCH24,FMM25}. We retain flat panels and correct the boundary values imposed on them. To implement this boundary value correction the distance between the polyhedral surrogate domain and the physical domain must be evaluated in the quadrature points in a preprocessing step. Small variations of the geometry can then easily be introduced by perturbing these distances.

The correction uses the Dirichlet trace and normal flux already present in a Calder\'on formulation. If the physical boundary lies a distance \(\rho\) from a panel point along the panel normal, a first-order Taylor expansion approximates the boundary value by the Dirichlet trace at that point plus \(\rho\) times the flux. Incorporating this expression into the penalised formulation introduces no additional unknown field.

We consider the Dirichlet problem for the Laplace equation: find \(u\) such that
\begin{alignat}{2}
-\Delta u \,&=\, 0 \quad && \text{in } \Omega \label{eq:Laplace}\\
u \,&=\, g_D \quad && \text{on } \Gamma:=\partial\Omega \label{eq:Laplace:bc}
\end{alignat}
Here \(\Omega\subset\mathbb R^3\) is a bounded domain with smooth, connected boundary and outward unit normal \(\mathbf n\). The computational domain \(\Omega_h\subset\Omega\) has a piecewise affine boundary \(\Gamma_h\). We impose the corrected Dirichlet condition weakly, using penalty terms in the two Calder\'on equations. This construction combines Nitsche-type boundary enforcement \cite{Nit71,BetckeBurmanScroggs2019} with boundary value correction \cite{BrambleDupontThomee1972}.

\paragraph{Contributions}\label{par:contributions}
For families of interior surrogate surfaces satisfying uniform geometry and operator bounds, we prove discrete stability when \(\gamma\delta_h\le c_0<4\). Here \(\gamma>0\) is the penalty parameter and \(\delta_h=\|\rho\|_{L^\infty(\Gamma_h)}\) is the maximum normal offset. The penalty controls the boundary mismatch and the constant mode in the kernel of the hypersingular operator.

The error estimate separates trace approximation from a geometry contribution of size \(\gamma^{1/2}\delta_h^2\|u\|_{H^3(\Omega)}\). With \(\gamma\simeq h^{-1}\), the sufficient condition \(\delta_h\lesssim h^{(k+1)/2}\) yields an error of order \(h^{k+1/2}\) for \(P_k/P_{k-1}\) spaces, with the degrees referring to the Dirichlet trace and flux, respectively. The required solution regularity and uniform bounds are stated in Section~\ref{sec:analysis}. In particular, an affine geometry approximation with \(\delta_h=O(h^2)\) supports the predicted rates through cubic Dirichlet approximation. Quadrature remains on flat panels, with its accuracy chosen for the polynomial degree. A second penalty variant also corrects the test trace and satisfies the same error estimate.

\paragraph{Related Work}\label{par:related-work}
Boundary integral operator theory is treated in \cite{HsiaoWendland2008,McLean2000}, and Galerkin boundary element methods in \cite{SauterSchwab2011,Steinbach2008}. Approaches to accurate integration on curved boundaries include specialised quadrature and desingularisation \cite{Guiggiani1992,RongWenXiao2013,KlaseboerSunChan2012}. Isogeometric boundary element collocation provides another way to represent curved geometries \cite{SSEBHS13}.

Augmented Lagrangian and Nitsche-type Calder\'on formulations impose boundary conditions weakly in Laplace and Helmholtz problems \cite{BetckeBurmanScroggs2019,BBS22} and in hybrid finite element--boundary element coupling \cite{BetckeBosyBurman2022}. Related \(hp\) finite element--boundary element coupling on non-matching meshes is considered in \cite{CHS26}. We use the penalised Calder\'on structure to incorporate the geometric correction.

Early work on boundary value correction includes the method of Bramble, Dupont, and Thom\'ee for polygonal approximations \cite{BrambleDupontThomee1972}. Taylor corrections along discrete normals have since been developed for unfitted finite element methods \cite{BurmanHansboLarson2018,BHL20,BurmanHansboLarson2018Stokes}. The shifted boundary method also imposes reconstructed boundary data on a surrogate boundary \cite{MainScovazzi2018a,MainScovazzi2018b}. The present method transfers this principle to boundary elements, using the existing flux unknown to correct the Dirichlet value along each panel normal.

\paragraph{Paper Organisation}\label{par:outline}
Section~\ref{sec:b_op} fixes the boundary integral conventions. Section~\ref{sec:method} introduces the surrogate geometry, corrected formulation and parameter choices, and Section~\ref{sec:analysis} establishes stability and convergence. Section~\ref{sec:numerics} presents the numerical experiments, and Section~\ref{sec:conclusion} summarises the results and their scope. Supporting estimates and the alternative penalty formulation are collected in Appendix~\ref{sec:appendix-coercivity}.

%% file: boundary_operators_v5.tex

\section{Boundary Integral Formulation}
\label{sec:b_op}

We first define the operators on a connected Lipschitz boundary \(\Gamma=\partial\Omega\) of a bounded domain, with outward unit normal \(\mathbf n\). The same definitions apply on \(\Gamma_h\). The trace identities fix the signs in the corrected formulation; the operator bounds underpin its stability proof. The coercivity estimates below concern three dimensions.

\subsection{Layer Potentials and Boundary Operators}
\label{subsec:layer-operators}

Let \(\gamma_D\) and \(\gamma_N=\partial_{\mathbf n}\) denote the interior Dirichlet and Neumann traces. In dimensions \(d=2,3\), the Green's function for the Laplace operator is
\begin{equation}
G_0(x,y)=\begin{cases}
-\dfrac{1}{2\pi}\log|x-y| & d=2\\
\dfrac{1}{4\pi|x-y|} & d=3
\end{cases} \label{eq:G0}
\end{equation}
The single- and double-layer potentials are defined by
\begin{align}
(\Psi^0_{SL}\phi)(x)&:=\int_\Gamma G_0(x,y)\phi(y)\,dS(y) \quad x\in\mathbb R^d\setminus\Gamma \label{eq:single}\\
(\Psi^0_{DL}\phi)(x)&:=\int_\Gamma\gamma_{N,y}G_0(x,y)\phi(y)\,dS(y) \quad x\in\mathbb R^d\setminus\Gamma \label{eq:double}
\end{align}
The subscript \(y\) indicates differentiation in the source variable. An interior harmonic function \(u\) has the representation
\begin{equation}
u(x)=-\Psi^0_{DL}(\gamma_Du)(x)+\Psi^0_{SL}(\gamma_Nu)(x)\qquad x\in\Omega \label{eq:represent}
\end{equation}
The mapping properties of these potentials are recalled in Appendix~\ref{subsec:app-operator-bounds}.

We use the single-layer boundary operator \(V\), the double-layer operator \(K\), its adjoint \(K'\), and the hypersingular operator \(D\). For smooth densities, \(V\) and \(K\) are given by
\begin{align}
(V\phi)(x)&:=\lim_{\epsilon\to0}\int_{\substack{y\in\Gamma\\|y-x|\ge\epsilon}}G_0(x,y)\phi(y)\,dS(y) \label{eq:Vdef}\\
(K\phi)(x)&:=\lim_{\epsilon\to0}\int_{\substack{y\in\Gamma\\|y-x|\ge\epsilon}}\gamma_{N,y}G_0(x,y)\phi(y)\,dS(y) \label{eq:K0def}
\end{align}
They extend to densities in \(H^{-1/2}(\Gamma)\) and \(H^{1/2}(\Gamma)\), respectively. The hypersingular operator is the negative weak Neumann trace of the double-layer potential:
\begin{equation}
D\phi:=-\gamma_N(\Psi^0_{DL}\phi) \label{eq:Ddef}
\end{equation}
Throughout, \(\langle\cdot,\cdot\rangle_\Gamma\) denotes the duality pairing between \(H^{-1/2}(\Gamma)\) and \(H^{1/2}(\Gamma)\), in either order. For \(L^2\) functions it agrees with the surface integral of the product.

\subsection{The Calder\'on Trace Identity}
\label{subsec:calderon-identity}

Taking the two interior traces of \eqref{eq:represent} gives
\begin{align}
\langle\gamma_Du,\mu\rangle_\Gamma&=\langle(\tfrac12I-K)\gamma_Du,\mu\rangle_\Gamma+\langle V\gamma_Nu,\mu\rangle_\Gamma \label{eq:Calderon_1}\\
\langle\gamma_Nu,v\rangle_\Gamma&=\langle(\tfrac12I+K')\gamma_Nu,v\rangle_\Gamma+\langle D\gamma_Du,v\rangle_\Gamma \label{eq:Calderon_2}
\end{align}
for all \(\mu\in H^{-1/2}(\Gamma)\) and \(v\in H^{1/2}(\Gamma)\). For a generic trial pair \((\lambda,w)\), with flux first and Dirichlet trace second, define
\begin{align}
\tilde{\mathcal C}_\Gamma[(\lambda,w),(\mu,v)]&:=-\langle Kw,\mu\rangle_\Gamma+\langle V\lambda,\mu\rangle_\Gamma \label{eq:mult_trace}\\
&\quad+\langle K'\lambda,v\rangle_\Gamma+\langle Dw,v\rangle_\Gamma \notag
\end{align}
For the exact traces \(\lambda=\gamma_Nu\) and \(w=\gamma_Du\), these relations combine to give
\begin{equation}
\tilde{\mathcal C}_\Gamma[(\lambda,w),(\mu,v)]=\tfrac12\langle w,\mu\rangle_\Gamma+\tfrac12\langle\lambda,v\rangle_\Gamma \label{eq:skewsym_relation}
\end{equation}
We use this identity to impose the corrected boundary condition. By adjointness, the off-diagonal operator terms cancel when the trial and test pairs coincide.

\subsection{Trace Spaces and Coercivity}
\label{subsec:trace-spaces}

The natural trace space and its norm are
\begin{align}
\mathcal V_\Gamma&:=H^{-1/2}(\Gamma)\times H^{1/2}(\Gamma) \label{eq:Vspace}\\
\tnorm{(\lambda,w)}_{\mathcal V_\Gamma}&:=\|\lambda\|_{H^{-1/2}(\Gamma)}+\|w\|_{H^{1/2}(\Gamma)} \label{eq:Vnorm}
\end{align}
The operators \(V:H^{-1/2}\to H^{1/2}\), \(D:H^{1/2}\to H^{-1/2}\), \(K:H^{1/2}\to H^{1/2}\), and \(K':H^{-1/2}\to H^{-1/2}\) are bounded on \(\Gamma\); their norm estimates are collected in Appendix~\ref{subsec:app-operator-bounds}. In three dimensions, the ellipticity estimates are \cite[Theorems 6.22 and 6.24]{Steinbach2008}
\begin{align}
c_1^V\|\lambda\|_{H^{-1/2}(\Gamma)}^2&\le\langle V\lambda,\lambda\rangle_\Gamma\qquad\forall\lambda\in H^{-1/2}(\Gamma) \label{eq:Vellipt}\\
c_2^D\|w\|_{H^{1/2}(\Gamma)}^2&\le\langle Dw,w\rangle_\Gamma\qquad\forall w\in H_*^{1/2}(\Gamma) \label{eq:Dellipt}
\end{align}
where \(c_1^V,c_2^D>0\) and \(H_*^{1/2}(\Gamma):=\{w\in H^{1/2}(\Gamma):\int_\Gamma w\,dS=0\}\). The operator \(D\) is symmetric and nonnegative, and its kernel consists of the globally constant functions. The Dirichlet penalty below controls this constant mode. The mapping and trace identities also hold in two dimensions; the additional condition needed for single-layer coercivity is discussed in Remark~\ref{rem:app-2d}.

The Taylor correction involves weighted surface integrals of the flux, so we use the subspace
\begin{equation}
\tilde{\mathcal V}_\Gamma:=L^2(\Gamma)\times H^{1/2}(\Gamma) \label{eq:Vtilde}
\end{equation}

%% file: method_v5.tex

\section{Boundary Value Correction}
\label{sec:method}\label{sec:weak}

We impose the physical Dirichlet data on a flat surrogate boundary by combining the normal flux with the Dirichlet trace. We first specify the geometry and corrected formulation, then give sufficient conditions on the penalty and geometric accuracy.

\subsection{Surrogate Geometry and the Taylor Correction}
\label{subsec:surrogate-geometry}

\paragraph{Geometry and Exact Traces}\label{par:dirichlet}
Let \(\Omega\subset\mathbb R^3\) be bounded with smooth, connected boundary \(\Gamma\). Let \(\Omega_h\subset\Omega\) be a bounded polyhedral domain with connected boundary \(\Gamma_h\subset\overline\Omega\). Its conforming, shape-regular, quasiuniform triangulation is denoted by \(\mathcal T_h\), with maximum panel diameter \(h\). The outward unit normal \(\mathbf n_h\) is constant on each open panel. We assume that
\begin{equation}
p(x)=x+\rho(x)\mathbf n_h(x)\in\Gamma,\qquad 0\le\rho(x)\le\delta_h:=\|\rho\|_{L^\infty(\Gamma_h)}\le C_\rho h \label{eq:normal-lift}
\end{equation}
defines a normal-ray map almost everywhere on \(\Gamma_h\), and that each segment from \(x\) to \(p(x)\) lies in \(\overline\Omega\). This map follows the panel normal and generally differs from the closest-point projection onto \(\Gamma\); see \cite[Section 2.3]{BHL20}. Values on panel edges do not affect the integrals. The analysis also requires uniform bounds for the family of surfaces and for the overlap of these normal segments, specified in Section~\ref{subsec:standing-assumptions}.

For the exact bulk solution \(u\), set
\begin{equation}
u^\Gamma:=u|_{\Gamma_h},\qquad\lambda^\Gamma:=(\nabla u)|_{\Gamma_h}\cdot\mathbf n_h,\qquad g_h:=g_D\circ p \label{eq:surrogate-traces}
\end{equation}
The superscript \(\Gamma\) identifies the exact traces on the surrogate \(\Gamma_h\). We assume that these traces are defined and that \(g_h\in L^2(\Gamma_h)\). The error estimates require \(u\in H^3(\Omega)\). All operators in the method are defined on \(\Gamma_h\), with normal \(\mathbf n_h\).

\paragraph{The Corrected Boundary Value}\label{par:dirichlet-correction}
Taylor's formula along each normal segment gives
\begin{equation}
u\circ p=u^\Gamma+\rho\lambda^\Gamma+r_h \label{eq:Taylor}
\end{equation}
with integral remainder
\begin{equation}
r_h(x):=\int_0^{\rho(x)}(\rho(x)-t)\,\mathbf n_h^T D^2u(x+t\mathbf n_h)\mathbf n_h\,dt \label{eq:rem-def}
\end{equation}
Since \(u\circ p=g_h\), the remainder measures the discrepancy between the prescribed data and \(u^\Gamma+\rho\lambda^\Gamma\). We neglect \(r_h\) and impose \(w+\rho\lambda\approx g_h\) for a trial pair \((\lambda,w)\).

\subsection{The Corrected Discrete Method}
\label{subsec:corrected-method}

\paragraph{Bilinear Form and Load}\label{par:penalized-cald}
Let \(\gamma>0\) be a scalar penalty parameter. For trial and test pairs in \(\tilde{\mathcal V}_{\Gamma_h}\), define
\begin{align}
\mathcal B_D[(\lambda,w),(\mu,v)]&:=-\tfrac12\langle\lambda,v\rangle_{\Gamma_h}+\tfrac12\langle w,\mu\rangle_{\Gamma_h} \label{eq:operator_Dirichlet}\\
&\quad+\langle\gamma(w+\rho\lambda),v\rangle_{\Gamma_h}+\langle\rho\lambda,\mu\rangle_{\Gamma_h} \notag
\end{align}
and
\begin{equation}
\mathcal L_D(\mu,v):=\langle g_h,\gamma v+\mu\rangle_{\Gamma_h} \label{eq:LDir}
\end{equation}
The full bilinear form is
\begin{equation}
\mathcal A_h((\lambda,w),(\mu,v)):=\tilde{\mathcal C}_{\Gamma_h}[(\lambda,w),(\mu,v)]+\mathcal B_D[(\lambda,w),(\mu,v)] \label{eq:abstract_form_Dir}
\end{equation}
To see how the penalty enforces the boundary condition, insert the exact harmonic traces in \eqref{eq:skewsym_relation}. Adding \(\mathcal B_D\) gives \(\langle u^\Gamma+\rho\lambda^\Gamma,\gamma v+\mu\rangle_{\Gamma_h}\). Thus \(\mathcal A_h=\mathcal L_D\) enforces the Taylor-corrected boundary value. With exact assembly, the Taylor remainder is the only consistency error.

\paragraph{Discrete Spaces}\label{par:discrete-spaces}
For a fixed degree \(k\ge1\), use continuous polynomials for the Dirichlet trace and discontinuous polynomials for the flux:
\begin{align}
S_k(\Gamma_h)&:=\{v_h\in C^0(\Gamma_h):v_h|_T\in\mathbb P_k(T)\ \forall T\in\mathcal T_h\} \label{eq:Sk-def}\\
\Lambda_{k-1}(\Gamma_h)&:=\{\mu_h\in L^2(\Gamma_h):\mu_h|_T\in\mathbb P_{k-1}(T)\ \forall T\in\mathcal T_h\} \label{eq:Lambda-def}\\
\mathcal V_h&:=\Lambda_{k-1}(\Gamma_h)\times S_k(\Gamma_h) \label{eq:Vh-def}
\end{align}
The notation \(P_k/P_{k-1}\) lists the Dirichlet degree first, whereas pairs in \(\mathcal V_h\) list the flux first. The continuous Dirichlet space contains the one-dimensional space of globally constant functions. The penalty controls this mode; continuity across panel edges excludes independent panelwise constants.

\paragraph{Discrete Formulation}\label{par:discrete-form}\label{par:discrete-consistency}
Find \((\lambda_h,u_h)\in\mathcal V_h\) such that
\begin{equation}
\mathcal A_h((\lambda_h,u_h),(\mu_h,v_h))=\mathcal L_D(\mu_h,v_h)\qquad\forall(\mu_h,v_h)\in\mathcal V_h \label{eq:BEMform}
\end{equation}
The analysis assumes exact evaluation of the bilinear form and load. A quadrature implementation must also control assembly errors.
The approximation in \(\Omega_h\) is reconstructed using layer potentials on \(\Gamma_h\):
\begin{equation}
\tilde u_h(x)=-\Psi^0_{DL,\Gamma_h}u_h(x)+\Psi^0_{SL,\Gamma_h}\lambda_h(x)\qquad x\in\Omega_h \label{eq:rec_disc}
\end{equation}


\begin{rem}[Interior surrogates]\label{rem:insideOmega}
The nonnegative offset makes \(\langle\rho\lambda,\lambda\rangle_{\Gamma_h}\) a stabilising term. If the surrogate extends outside \(\Omega\), this term can be negative and the stability proof does not apply. Consistency would also require a harmonic extension throughout \(\Omega_h\) with the stated regularity. Such an extension need not exist for a general Dirichlet problem, so exterior surrogates are not justified by the present analysis. The exterior experiments in Section~\ref{sec:num-outside} use a manufactured solution that admits this extension and serve only as diagnostics.
\end{rem}

\begin{rem}[Two Dirichlet variants]\label{rem:two-dirichlet}
An alternative penalty uses the corrected test trace \(v+\rho\mu\). Appendix~\ref{subsec:app-variants} defines that formulation, proves its stability, and compares the exact-solution residuals. Both variants satisfy the error estimate in Theorem~\ref{thm:main}, with possibly different constants; their discrete solutions need not coincide.
\end{rem}

\subsection{Parameter Choices and Predicted Rates}
\label{subsec:design-rules}

\label{par:design-rules}
Take
\begin{equation}
\gamma=c_\gamma h^{-1},\qquad c_\gamma>0 \label{eq:num-gamma}
\end{equation}
The error estimate contains a trace approximation term of order \(h^{k+1/2}\) and a geometry term of order \(\gamma^{1/2}\delta_h^2\). Thus a sufficient design condition is \(\delta_h\le c_\rho h^{(k+1)/2}\). Table~\ref{tab:design-rates} states the resulting rates for sufficiently regular solutions under the assumptions of Theorem~\ref{thm:main}. For \(k=1\), choose \(c_\gamma c_\rho\le c_0<4\); for \(k>1\), the condition \(\gamma\delta_h\le c_0\) holds for sufficiently small \(h\).

\begin{table}[htbp]
\centering
\begin{tabular}{cccc}
\toprule
Spaces & Maximum offset bound & Bound on \(\gamma\delta_h\) & Error order \\
\midrule
\(P_1/P_0\) & \(c_\rho h\) & \(c_\gamma c_\rho\) & \(h^{3/2}\) \\
\(P_2/P_1\) & \(c_\rho h^{3/2}\) & \(c_\gamma c_\rho h^{1/2}\) & \(h^{5/2}\) \\
\(P_3/P_2\) & \(c_\rho h^2\) & \(c_\gamma c_\rho h\) & \(h^{7/2}\) \\
\bottomrule
\end{tabular}
\caption{Sufficient geometry accuracy with \(\gamma=c_\gamma h^{-1}\). The constants are independent of \(h\); the offset bound concerns \(\delta_h=\|\rho\|_{L^\infty(\Gamma_h)}\), not its surface mean.}
\label{tab:design-rates}
\end{table}

For an affine approximation with \(\delta_h=O(h^2)\), the first-order correction supports the predicted rate up to \(k=3\). For higher degrees, the present estimate requires a smaller geometric error to retain the predicted rate. A higher-order boundary correction would require additional treatment of derivatives beyond the Dirichlet trace and normal flux; it is not a straightforward extension of the present BEM formulation and is not developed here. Quadrature on flat panels must still be accurate enough for the chosen polynomial degree.

%% file: analysis_v5.tex

\section{Stability and Error Estimates}
\label{sec:analysis}
We establish stability and an a priori error estimate for \eqref{eq:BEMform} on three-dimensional interior surrogates. The argument combines coercivity, a bound on the Taylor remainder and surface approximation estimates. Constants may depend on the fixed polynomial degree, but are independent of \(h\).

\subsection{Standing Assumptions and the Error Norm}
\label{subsec:standing-assumptions}\label{par:assumptions-estimates}

The assumptions fall into three groups: the first two ensure uniform stability, and the third controls the Taylor remainder. Additional trace regularity for the convergence rates is stated separately in Section~\ref{subsec:approximation-rates}.

\paragraph{Geometry, Meshes, and Operators}
We use the interior geometry and conforming, shape-regular, quasiuniform surface meshes from Section~\ref{subsec:surrogate-geometry}. The family \(\Gamma_h\) has uniform constants in the operator mapping bounds \eqref{eq:stab_single}--\eqref{eq:stab_double}, the single-layer coercivity bound \eqref{eq:Vellipt}, and the mean-zero hypersingular bound \eqref{eq:Dellipt}. We also assume uniform equivalence of surface Sobolev norms under Lipschitz parametrisations and uniform upper and lower bounds on \(|\Gamma_h|\). In particular,
\begin{align}
\langle V\mu,\mu\rangle_{\Gamma_h}&\ge c_V\|\mu\|_{H^{-1/2}(\Gamma_h)}^2 \label{eq:uniform-v}\\
\langle Dv,v\rangle_{\Gamma_h}+\|v\|_{\Gamma_h}^2&\ge c_D\|v\|_{H^{1/2}(\Gamma_h)}^2 \label{eq:uniform-d}
\end{align}
with constants independent of \(h\). The second bound follows by separating the mean of \(v\), as shown in Lemma~\ref{lem:app-coercivity}.

\paragraph{Penalty Scaling}
Choose \(h_0\) so that \(C_\rho h_0\) lies within a fixed tubular neighbourhood of \(\Gamma\). For \(0<h\le h_0\), assume
\begin{equation}
c_\gamma^-h^{-1}\le\gamma\le c_\gamma^+h^{-1},\qquad\gamma\delta_h\le c_0<4 \label{eq:penalty_scaling}
\end{equation}
where \(c_\gamma^-\), \(c_\gamma^+\), and \(c_0\) are fixed positive constants. The scaling \(\gamma\simeq h^{-1}\) balances the trace and penalty terms; the upper bound on \(\gamma\delta_h\) controls their coupling.

\paragraph{Overlap of Normal Segments}
Let \(U_{\delta_h}:=\{y\in\Omega:\operatorname{dist}(y,\Gamma)<\delta_h\}\). We assume
\begin{equation}
\sum_{T\in\mathcal T_h}\int_T\int_0^{\rho(x)}|f(x+t\mathbf n_h)|^2\,dt\,dS(x)\le C_{\mathrm{ray}}\|f\|_{U_{\delta_h}}^2\qquad\forall f\in L^2(U_{\delta_h}) \label{eq:ray-bound}
\end{equation}
with \(C_{\mathrm{ray}}\) independent of \(h\). Each segment lies in the strip, up to its endpoints, because it ends at \(p(x)\in\Gamma\) and has length at most \(\delta_h\). On a flat panel, \((x,t)\mapsto x+t\mathbf n_h\) has unit volume Jacobian. A uniform bound on the number of segments passing through a point therefore implies \eqref{eq:ray-bound}; see \cite[Section 2.3]{BHL20}.

\paragraph{Error Norm}
For \((\mu,v)\in\tilde{\mathcal V}_{\Gamma_h}\), set
\begin{equation}
\tnorm{(\mu,v)}_{\mathcal B}:=\|\mu\|_{H^{-1/2}(\Gamma_h)}+\|v\|_{H^{1/2}(\Gamma_h)}+\|\gamma^{1/2}v\|_{\Gamma_h}+\|\rho^{1/2}\mu\|_{\Gamma_h} \label{eq:Bnorm_def}
\end{equation}
The first two terms measure the natural trace errors. The last two are the weighted \(L^2\) terms supplied by the corrected penalty.

\subsection{Stability of the Corrected Form}
\label{subsec:stability}\label{par:verify-sketch}

The stability argument follows from the energy identity. By adjointness, the off-diagonal Calder\'on terms cancel:
\begin{equation}
\tilde{\mathcal C}_{\Gamma_h}[(\mu,v),(\mu,v)]=\langle V\mu,\mu\rangle_{\Gamma_h}+\langle Dv,v\rangle_{\Gamma_h} \label{eq:app-tildeC-energy}
\end{equation}
The corrected penalty satisfies
\begin{equation}
\mathcal B_D[(\mu,v),(\mu,v)]=\|\gamma^{1/2}v\|_{\Gamma_h}^2+\|\rho^{1/2}\mu\|_{\Gamma_h}^2+\langle\gamma\rho\mu,v\rangle_{\Gamma_h} \label{eq:app-BD-energy}
\end{equation}
Writing \(a=\|\gamma^{1/2}v\|_{\Gamma_h}\), \(b=\|\rho^{1/2}\mu\|_{\Gamma_h}\), and \(\beta=1-\sqrt{c_0}/2>0\), we have
\begin{equation}
|\langle\gamma\rho\mu,v\rangle_{\Gamma_h}|\le\sqrt{c_0}\,ab\le\tfrac12\sqrt{c_0}(a^2+b^2) \label{eq:app-young}
\end{equation}
and hence
\begin{equation}
\mathcal A_h((\mu,v),(\mu,v))\ge\langle V\mu,\mu\rangle_{\Gamma_h}+\langle Dv,v\rangle_{\Gamma_h}+\beta(a^2+b^2) \label{eq:app-lower-pre}
\end{equation}
The \(L^2\) penalty controls the constant mode of \(D\). Since \(\gamma\ge c_\gamma^-/h_0>0\), the operator estimates \eqref{eq:uniform-v}--\eqref{eq:uniform-d} and \eqref{eq:app-lower-pre} control all four terms of \eqref{eq:Bnorm_def} uniformly. For continuity, the only mixed penalty term satisfies
\begin{equation}
|\langle\gamma\rho\lambda,v\rangle_{\Gamma_h}|\le\sqrt{\gamma\delta_h}\|\rho^{1/2}\lambda\|_{\Gamma_h}\|\gamma^{1/2}v\|_{\Gamma_h} \label{eq:app-mixed-CS}
\end{equation}
Together with the operator bounds, this gives
\begin{align}
\alpha_{\mathcal B}\tnorm{(\mu,v)}_{\mathcal B}^2&\le\mathcal A_h((\mu,v),(\mu,v)) \label{single_coercive}\\
|\mathcal A_h((\lambda,w),(\mu,v))|&\le M\tnorm{(\lambda,w)}_{\mathcal B}\tnorm{(\mu,v)}_{\mathcal B} \label{eq:continuity}
\end{align}
for all pairs in \(\tilde{\mathcal V}_{\Gamma_h}\), with \(\alpha_{\mathcal B},M>0\) independent of \(h\). Lemmas~\ref{lem:app-continuity} and~\ref{lem:app-coercivity} supply the remaining details. This extends the uncorrected penalty argument in \cite{BetckeBurmanScroggs2019}.

\subsection{Consistency and the Geometry Error}
\label{subsec:consistency}\label{par:consistency}
Write \(x:=(\lambda^\Gamma,u^\Gamma)\) for the exact traces from \eqref{eq:surrogate-traces}, and \(x_h:=(\lambda_h,u_h)\).

\begin{lem}\label{lem:galortho}
If \(u\in H^3(\Omega)\) solves \eqref{eq:Laplace}--\eqref{eq:Laplace:bc}, then
\begin{equation}
\mathcal A_h(x-x_h,(\mu_h,v_h))=-\langle r_h,\gamma v_h+\mu_h\rangle_{\Gamma_h}\qquad\forall(\mu_h,v_h)\in\mathcal V_h \label{eq:galortho}
\end{equation}
where \(r_h\) is defined in \eqref{eq:rem-def}.
\end{lem}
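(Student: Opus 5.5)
The plan is to evaluate \(\mathcal A_h(x,(\mu_h,v_h))\) for the exact traces and compare it with the discrete equation \eqref{eq:BEMform}. Since the bilinear form is linear in its first argument, \(\mathcal A_h(x-x_h,\cdot)=\mathcal A_h(x,\cdot)-\mathcal L_D(\cdot)\) on \(\mathcal V_h\). The lemma therefore reduces to the identity
\[
\mathcal A_h(x,(\mu_h,v_h))=\langle u^\Gamma+\rho\lambda^\Gamma,\gamma v_h+\mu_h\rangle_{\Gamma_h}.
\]
Combined with the Taylor expansion \eqref{eq:Taylor} and \(u\circ p=g_h\), this identity gives
\[
\mathcal A_h(x,\cdot)-\mathcal L_D(\cdot)=\langle u^\Gamma+\rho\lambda^\Gamma-g_h,\gamma v_h+\mu_h\rangle_{\Gamma_h}=-\langle r_h,\gamma v_h+\mu_h\rangle_{\Gamma_h}.
\]

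The first step applies the Calder\'on identity \eqref{eq:skewsym_relation} on the surrogate boundary \(\Gamma_h\) rather than on \(\Gamma\). Since \(\Omega_h\subset\Omega\) and \(u\) is harmonic in \(\Omega\), the restriction \(u|_{\Omega_h}\) is harmonic in the Lipschitz polyhedral domain \(\Omega_h\). Its interior traces on \(\Gamma_h\) with respect to \(\mathbf n_h\) are exactly \(u^\Gamma\) and \(\lambda^\Gamma\). The representation formula \eqref{eq:represent} holds on \(\Omega_h\) with layer potentials over \(\Gamma_h\). Hence \eqref{eq:Calderon_1}--\eqref{eq:Calderon_2}, and therefore \eqref{eq:skewsym_relation}, hold with \(\Gamma\) replaced by \(\Gamma_h\). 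This gives
\[
\tilde{\mathcal C}_{\Gamma_h}[x,(\mu_h,v_h)]=\tfrac12\langle u^\Gamma,\mu_h\rangle_{\Gamma_h}+\tfrac12\langle\lambda^\Gamma,v_h\rangle_{\Gamma_h}.
\]

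Adding \(\mathcal B_D[x,(\mu_h,v_h)]\) from \eqref{eq:operator_Dirichlet}, the \(\pm\tfrac12\) terms cancel the Calder\'on contribution exactly: \(-\tfrac12\langle\lambda^\Gamma,v_h\rangle\) cancels \(+\tfrac12\langle\lambda^\Gamma,v_h\rangle\), and the two \(\tfrac12\langle u^\Gamma,\mu_h\rangle\) terms combine to \(\langle u^\Gamma,\mu_h\rangle\). What remains is
\[
\langle\gamma(u^\Gamma+\rho\lambda^\Gamma),v_h\rangle_{\Gamma_h}+\langle u^\Gamma+\rho\lambda^\Gamma,\mu_h\rangle_{\Gamma_h},
\]
which is the identity stated above.

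The main obstacle is justifying the regularity used in these steps rather than the algebra. With \(u\in H^3(\Omega)\), the traces satisfy \(u^\Gamma\in H^{1/2}(\Gamma_h)\) and \(\lambda^\Gamma\in L^2(\Gamma_h)\subset H^{-1/2}(\Gamma_h)\). The latter holds because \(\nabla u\in H^2(\Omega)\) has an \(L^2\) trace panelwise. So \(x\in\tilde{\mathcal V}_{\Gamma_h}\), all pairings with \(\rho\) are well-defined surface integrals, and the duality pairings reduce to \(L^2\) integrals. The pointwise Taylor identity \eqref{eq:Taylor} with remainder \eqref{eq:rem-def} is first established for smooth \(u\). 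It then extends to \(H^3\) by density, using the ray bound \eqref{eq:ray-bound}: this shows \(r_h\in L^2(\Gamma_h)\) and makes both sides continuous in \(u\). The assumption that each segment lies in \(\overline\Omega\) ensures \(D^2u\) is evaluated where it is defined.
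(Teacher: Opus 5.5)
Your proposal is correct and follows the paper's proof. You apply the Calder\'on identity \eqref{eq:skewsym_relation} on \(\Gamma_h\) to the exact traces, add \(\mathcal B_D\) to obtain \(\langle u^\Gamma+\rho\lambda^\Gamma,\gamma v_h+\mu_h\rangle_{\Gamma_h}\), use \(g_h=u^\Gamma+\rho\lambda^\Gamma+r_h\), and subtract \eqref{eq:BEMform}; your justification of the Calder\'on identity on the polyhedral surrogate and of the Taylor formula for \(u\in H^3(\Omega)\) supplies details the paper leaves implicit.
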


\begin{proof}
The Calder\'on identity \eqref{eq:skewsym_relation} and \(g_h=u^\Gamma+\rho\lambda^\Gamma+r_h\) give \(\mathcal A_h(x,(\mu_h,v_h))-\mathcal L_D(\mu_h,v_h)=-\langle r_h,\gamma v_h+\mu_h\rangle_{\Gamma_h}\). Subtracting \eqref{eq:BEMform} proves the result.
\end{proof}

\paragraph{Taylor Remainder Control}\label{par:taylor-control}
Applying Cauchy--Schwarz to the integral remainder \eqref{eq:rem-def} gives
\begin{equation}
|r_h(x)|^2\le\frac{\rho(x)^3}{3}\int_0^{\rho(x)}|D^2u(x+t\mathbf n_h)|^2\,dt \label{eq:cs_rem}
\end{equation}
Integration and \eqref{eq:ray-bound} imply
\begin{equation}
\gamma^{1/2}\|r_h\|_{\Gamma_h}\le C\gamma^{1/2}\delta_h^{3/2}\|D^2u\|_{U_{\delta_h}} \label{eq:gamma_term}
\end{equation}
For a smooth fixed boundary, integration along its true normals gives the strip inequality
\begin{equation}
\|f\|_{U_\delta}^2\le C\big(\delta\|f\|_\Gamma^2+\delta^2\|\nabla f\|_{U_\delta}^2\big)\qquad\forall f\in H^1(\Omega) \label{eq:strip-bound}
\end{equation}
for sufficiently small \(\delta\). To obtain this bound, apply the fundamental theorem of calculus on each normal segment, followed by Cauchy--Schwarz and the uniform bounds on the tubular-coordinate Jacobian. The same strip estimate is used in \cite{BurmanHansboLarson2018}. Applying it componentwise to \(D^2u\), followed by the trace bound for \(H^1(\Omega)\), yields
\begin{equation}
\gamma^{1/2}\delta_h^{3/2}\|D^2u\|_{U_{\delta_h}}\le C\gamma^{1/2}\delta_h^2\|u\|_{H^3(\Omega)} \label{eq:bhL_est}
\end{equation}
If \(\delta_h=0\), then \(r_h=0\) and the remainder bound holds directly.

\subsection{Discrete Well-Posedness and Best Approximation}
\label{subsec:best-approximation}\label{par:apriori}

Coercivity gives discrete well-posedness. Combining it with the consistency identity then separates best approximation from the Taylor remainder.

\begin{prop}\label{prop:exist_uniqueness}
Under the standing assumptions, \eqref{eq:BEMform} has a unique solution in \(\mathcal V_h\) for every \(g_h\in L^2(\Gamma_h)\).
\end{prop}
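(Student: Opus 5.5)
The plan is to treat \eqref{eq:BEMform} as a square linear system on the finite-dimensional space \(\mathcal V_h\) and show that the associated homogeneous problem has only the trivial solution. Existence then follows from uniqueness by the rank--nullity theorem. Since trial and test spaces coincide, the number of equations equals the number of unknowns, so injectivity of the system matrix suffices.

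First we check that the problem is well defined. For \(\mathcal V_h\subset\tilde{\mathcal V}_{\Gamma_h}\), every term of \(\mathcal A_h\) is finite: the fluxes are piecewise polynomials, hence in \(L^2(\Gamma_h)\subset H^{-1/2}(\Gamma_h)\), and the continuous piecewise polynomials lie in \(H^{1/2}(\Gamma_h)\). The weights \(\rho\in L^\infty\) and \(\gamma\) make the penalty pairings well defined. The load \(\mathcal L_D(\mu_h,v_h)=\langle g_h,\gamma v_h+\mu_h\rangle_{\Gamma_h}\) is a bounded linear functional whenever \(g_h\in L^2(\Gamma_h)\). Thus \eqref{eq:BEMform} is a linear system of \(\dim\mathcal V_h\) equations in \(\dim\mathcal V_h\) unknowns.

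Next we prove uniqueness. Suppose \((\lambda_h,u_h)\in\mathcal V_h\) satisfies \(\mathcal A_h((\lambda_h,u_h),(\mu_h,v_h))=0\) for all test pairs. Testing with \((\mu_h,v_h)=(\lambda_h,u_h)\) and invoking the coercivity bound \eqref{single_coercive} gives
\[
\alpha_{\mathcal B}\tnorm{(\lambda_h,u_h)}_{\mathcal B}^2\le\mathcal A_h((\lambda_h,u_h),(\lambda_h,u_h))=0 .
\]
Since \(\tnorm{\cdot}_{\mathcal B}\) dominates \(\|\lambda_h\|_{H^{-1/2}(\Gamma_h)}+\|u_h\|_{H^{1/2}(\Gamma_h)}\), which is a norm, we conclude \(\lambda_h=0\) and \(u_h=0\). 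The system matrix is therefore injective, hence invertible, and a unique solution exists for every \(g_h\in L^2(\Gamma_h)\).

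The only substantive input is \eqref{single_coercive}, so it is worth recording why it holds under the standing assumptions, as in \eqref{eq:app-lower-pre}.
\begin{itemize}
\item The skew terms cancel, leaving \(\langle V\mu,\mu\rangle+\langle Dv,v\rangle\).
\item The mixed penalty term is absorbed by Young's inequality, using \(\gamma\delta_h\le c_0<4\), which leaves \(\beta(a^2+b^2)\) with \(\beta>0\).
\item The constant mode in the kernel of \(D\) is controlled by \(\|\gamma^{1/2}v\|^2\ge(c_\gamma^-/h_0)\|v\|^2\) together with \eqref{eq:uniform-d}.
\end{itemize}
The main point to watch is precisely this: the constant-mode control through the \(L^2\) penalty must hold uniformly in \(h\). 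For existence and uniqueness alone, however, any positive \(\alpha_{\mathcal B}\) suffices, so \(h\)-uniformity is not needed for this proposition.
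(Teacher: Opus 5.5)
Your proof is correct and is essentially the paper's own argument. The coercivity bound \eqref{single_coercive} rules out nontrivial homogeneous solutions, trial and test spaces coincide in the finite-dimensional space \(\mathcal V_h\) so the matrix is invertible, and \(g_h\in L^2(\Gamma_h)\) makes the load well defined; your observation that only positivity of \(\alpha_{\mathcal B}\), not its \(h\)-uniformity, is needed here is also accurate.
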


\begin{proof}
Coercivity \eqref{single_coercive} implies that the homogeneous system has only the zero solution. Since the trial and test spaces coincide and are finite-dimensional, the matrix is invertible. The assumption \(g_h\in L^2(\Gamma_h)\) ensures that the load is defined.
\end{proof}

\begin{prop}\label{prop:best_approximation}
Let \(u\in H^3(\Omega)\) solve the Dirichlet problem. Under the standing assumptions,
\begin{equation}
\tnorm{x-x_h}_{\mathcal B}\le (1+M/\alpha_{\mathcal B})\inf_{y_h\in\mathcal V_h}\tnorm{x-y_h}_{\mathcal B}+\frac{C}{\alpha_{\mathcal B}}\gamma^{1/2}\|r_h\|_{\Gamma_h} \label{eq:best}
\end{equation}
where \(C\) depends on the surface inverse constant and \(c_\gamma^-\).
\end{prop}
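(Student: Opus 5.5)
Fix an arbitrary $y_h\in\mathcal V_h$ and use the Strang-type splitting
\[
x-x_h=(x-y_h)+(y_h-x_h),
\qquad
e_h:=y_h-x_h=(\mu_h,v_h)\in\mathcal V_h .
\]
By the triangle inequality it suffices to bound $\tnorm{e_h}_{\mathcal B}$. All terms live in $\tilde{\mathcal V}_{\Gamma_h}$: the exact flux $\lambda^\Gamma$ lies in $L^2(\Gamma_h)$ because $u\in H^3(\Omega)$ has $H^1$ traces, and discrete fluxes are piecewise polynomial. Hence coercivity \eqref{single_coercive} and continuity \eqref{eq:continuity} apply to every pair below.

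Apply coercivity to $e_h$ and split $e_h=(y_h-x)+(x-x_h)$ in the first argument:
\[
\alpha_{\mathcal B}\tnorm{e_h}_{\mathcal B}^2
\le \mathcal A_h(e_h,e_h)
= \mathcal A_h(y_h-x,e_h)+\mathcal A_h(x-x_h,e_h).
\]
The first term is bounded by continuity, giving $M\tnorm{x-y_h}_{\mathcal B}\tnorm{e_h}_{\mathcal B}$. For the second term, $e_h$ is an admissible discrete test pair, so Lemma~\ref{lem:galortho} gives
\[
\mathcal A_h(x-x_h,e_h)=-\langle r_h,\gamma v_h+\mu_h\rangle_{\Gamma_h}.
\]
Dividing by $\tnorm{e_h}_{\mathcal B}$ and adding $\tnorm{x-y_h}_{\mathcal B}$ then produces the factor $1+M/\alpha_{\mathcal B}$, provided the remainder pairing is bounded by $C\gamma^{1/2}\|r_h\|_{\Gamma_h}\tnorm{e_h}_{\mathcal B}$. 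Taking the infimum over $y_h$ completes the proof.

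The main step is this remainder pairing. The Dirichlet part is immediate by Cauchy--Schwarz:
\[
|\langle r_h,\gamma v_h\rangle_{\Gamma_h}|
\le \gamma^{1/2}\|r_h\|_{\Gamma_h}\,\|\gamma^{1/2}v_h\|_{\Gamma_h}.
\]
The flux part $\langle r_h,\mu_h\rangle_{\Gamma_h}$ is harder. The weighted norm $\|\rho^{1/2}\mu_h\|_{\Gamma_h}$ gives no control where $\rho$ is small, so it cannot be used. Instead I would use the norm $\|\mu_h\|_{H^{-1/2}(\Gamma_h)}$ together with an inverse estimate on the quasiuniform, shape-regular mesh for the discontinuous space $\Lambda_{k-1}$:
\[
\|\mu_h\|_{\Gamma_h}\le C_{\mathrm{inv}}h^{-1/2}\|\mu_h\|_{H^{-1/2}(\Gamma_h)}.
\]
Then
\[
|\langle r_h,\mu_h\rangle_{\Gamma_h}|
\le C_{\mathrm{inv}}h^{-1/2}\|r_h\|_{\Gamma_h}\|\mu_h\|_{H^{-1/2}(\Gamma_h)}.
\]
The lower penalty bound $\gamma\ge c_\gamma^-h^{-1}$ gives $h^{-1/2}\le(c_\gamma^-)^{-1/2}\gamma^{1/2}$. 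Combining the two parts yields
\[
|\langle r_h,\gamma v_h+\mu_h\rangle_{\Gamma_h}|
\le C\gamma^{1/2}\|r_h\|_{\Gamma_h}\tnorm{e_h}_{\mathcal B},
\qquad C=1+C_{\mathrm{inv}}(c_\gamma^-)^{-1/2},
\]
which explains why the constant depends on the inverse constant and on $c_\gamma^-$. The delicate point is that this inverse inequality must hold uniformly on the family of polyhedral surfaces $\Gamma_h$. I would justify it from quasiuniformity and the assumed uniform equivalence of surface Sobolev norms under Lipschitz parametrisations, via the standard $L^2$--$H^{-1/2}$ inverse estimate for piecewise polynomials.
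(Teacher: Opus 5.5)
Your proposal is correct and follows the paper's argument. The paper likewise uses the splitting \(\theta=y_h-x_h\), coercivity, the consistency identity of Lemma~\ref{lem:galortho}, and continuity. It bounds the remainder pairing the same way: Cauchy--Schwarz for the Dirichlet part, and for the flux part the discrete inverse estimate of Lemma~\ref{lem:app-inverse} combined with \(\gamma\ge c_\gamma^-h^{-1}\). The only omission is the trivial case \(e_h=0\), in which no division by \(\tnorm{e_h}_{\mathcal B}\) is needed.
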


\begin{proof}
Lemma~\ref{lem:app-inverse} gives \(\|\gamma^{-1/2}\mu_h\|_{\Gamma_h}\le C\|\mu_h\|_{H^{-1/2}(\Gamma_h)}\) for discrete fluxes.
Consequently, for every \(z_h=(\mu_h,v_h)\in\mathcal V_h\),
\begin{align}
|\langle r_h,\gamma v_h+\mu_h\rangle_{\Gamma_h}|
&\le\gamma^{1/2}\|r_h\|_{\Gamma_h}\big(\|\gamma^{1/2}v_h\|_{\Gamma_h}+\|\gamma^{-1/2}\mu_h\|_{\Gamma_h}\big) \label{eq:cs_block}\\
&\le C\gamma^{1/2}\|r_h\|_{\Gamma_h}\tnorm{z_h}_{\mathcal B} \label{eq:discrete-residual-bound}
\end{align}
For any \(y_h\in\mathcal V_h\), put \(\eta=x-y_h\) and \(\theta=y_h-x_h=(\theta_\lambda,\theta_u)\). By coercivity, consistency, and continuity,
\begin{align}
\alpha_{\mathcal B}\tnorm{\theta}_{\mathcal B}^2&\le\mathcal A_h(\theta,\theta) \label{eq:coerc_step}\\
\mathcal A_h(\theta,\theta)&=-\langle r_h,\gamma\theta_u+\theta_\lambda\rangle_{\Gamma_h}-\mathcal A_h(\eta,\theta) \label{eq:galerkin_split}\\
|\mathcal A_h(\theta,\theta)|&\le\big(C\gamma^{1/2}\|r_h\|_{\Gamma_h}+M\tnorm{\eta}_{\mathcal B}\big)\tnorm{\theta}_{\mathcal B} \label{eq:cont_step}
\end{align}
If \(\theta\ne0\), divide by its norm. The triangle inequality and the infimum over \(y_h\) then give \eqref{eq:best}; the case \(\theta=0\) follows directly.
\end{proof}

\subsection{Approximation and Convergence Rates}
\label{subsec:approximation-rates}\label{par:trace}

\paragraph{Surface Approximation}
For integer \(m\ge0\), let \(|w|_{H^m(\mathcal T_h)}^2:=\sum_{T\in\mathcal T_h}|w|_{H^m(T)}^2\). Assume \(v\in H^1(\Gamma_h)\) has panelwise \(H^{k+1}\) regularity with compatible nodal values, and \(\mu\) has panelwise \(H^k\) regularity. The nodal interpolant \(I_hv\) and elementwise \(L^2\) projection \(\Pi_h\mu\) satisfy
\begin{align}
\|v-I_hv\|_{\Gamma_h}+h\|v-I_hv\|_{H^1(\Gamma_h)}&\le Ch^{k+1}|v|_{H^{k+1}(\mathcal T_h)} \label{eq:dirichlet-interpolation}\\
\|\mu-\Pi_h\mu\|_{\Gamma_h}&\le Ch^k|\mu|_{H^k(\mathcal T_h)} \label{eq:flux-projection}\\
\|\mu-\Pi_h\mu\|_{H^{-1/2}(\Gamma_h)}&\le Ch^{1/2}\|\mu-\Pi_h\mu\|_{\Gamma_h} \label{eq:negative-projection}
\end{align}
The first two bounds follow by affine scaling of the local polynomial approximation estimates. For the third, subtract the panel mean of each \(H^{1/2}\) test function in the dual pairing and apply the local fractional Poincar\'e inequality; projection orthogonality is essential. Interpolation between \(L^2\) and \(H^1\) bounds the Dirichlet error in \(H^{1/2}\). Together with \(\delta_h\lesssim h\) and \(\gamma\simeq h^{-1}\), these estimates give
\begin{equation}
\inf_{(\mu_h,v_h)\in\mathcal V_h}\tnorm{(\mu-\mu_h,v-v_h)}_{\mathcal B}\le Ch^{k+1/2}\big(|v|_{H^{k+1}(\mathcal T_h)}+|\mu|_{H^k(\mathcal T_h)}\big) \label{eq:approx}
\end{equation}
For the weighted flux term, we use \(\|\rho^{1/2}(\mu-\Pi_h\mu)\|_{\Gamma_h}\le\delta_h^{1/2}\|\mu-\Pi_h\mu\|_{\Gamma_h}\). The panelwise regularity assumptions allow jumps in derivatives across polyhedral edges.

\paragraph{Additional Uniform Trace Bounds}
To express the surface estimate in a bulk norm, we impose the additional uniform trace bounds
\begin{align}
|u^\Gamma|_{H^{k+1}(\mathcal T_h)}&\le C_{\mathrm{tr}}\|u\|_{H^{k+3/2}(\Omega)} \label{eq:trace_bounds}\\
|\lambda^\Gamma|_{H^k(\mathcal T_h)}&\le C_{\mathrm{tr}}\|u\|_{H^{k+3/2}(\Omega)} \label{eq:trace-flux}
\end{align}
The traces must also satisfy the compatibility conditions for \(I_h\). These bounds are assumed to hold uniformly as the integration surface varies with \(h\). Define
\begin{equation}
\mathcal M_k(u;\Gamma_h):=|u^\Gamma|_{H^{k+1}(\mathcal T_h)}+|\lambda^\Gamma|_{H^k(\mathcal T_h)}\le C\|u\|_{H^{k+3/2}(\Omega)} \label{eq:trace_combine}
\end{equation}
where the final inequality is used only under \eqref{eq:trace_bounds}--\eqref{eq:trace-flux}.

\begin{thm}[Discrete well-posedness and a priori error]\label{thm:main}
Suppose the assumptions in Section~\ref{subsec:standing-assumptions} hold, and let \(u\in H^3(\Omega)\) solve the Dirichlet problem. The discrete solution is unique. If the exact traces of \(u\) on \(\Gamma_h\) have the regularity required in \eqref{eq:approx}, then
\begin{equation}
\tnorm{(\lambda^\Gamma-\lambda_h,u^\Gamma-u_h)}_{\mathcal B}\le C\big(h^{k+1/2}\mathcal M_k(u;\Gamma_h)+\gamma^{1/2}\delta_h^2\|u\|_{H^3(\Omega)}\big) \label{eq:surface-error}
\end{equation}
If, in addition, \(u\in H^{k+3/2}(\Omega)\) and the uniform trace bounds \eqref{eq:trace_bounds}--\eqref{eq:trace-flux} hold, then
\begin{equation}
\tnorm{(\lambda^\Gamma-\lambda_h,u^\Gamma-u_h)}_{\mathcal B}\le C\big(h^{k+1/2}\|u\|_{H^{k+3/2}(\Omega)}+\gamma^{1/2}\delta_h^2\|u\|_{H^3(\Omega)}\big) \label{eq:main_error}
\end{equation}
The sufficient geometry condition
\begin{equation}
\delta_h\le c_\rho h^{(k+1)/2} \label{eq:rho_choice}
\end{equation}
therefore implies
\begin{equation}
\tnorm{(\lambda^\Gamma-\lambda_h,u^\Gamma-u_h)}_{\mathcal B}\le Ch^{k+1/2}\big(\|u\|_{H^{k+3/2}(\Omega)}+\|u\|_{H^3(\Omega)}\big) \label{eq:optimized_rate}
\end{equation}
subject also to \eqref{eq:penalty_scaling}.
\end{thm}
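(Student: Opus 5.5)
The theorem assembles results already in place, so the plan is to chain them. Uniqueness is Proposition~\ref{prop:exist_uniqueness}, which rests on the coercivity bound \eqref{single_coercive} under \eqref{eq:penalty_scaling}. For the error, I start from the best-approximation estimate \eqref{eq:best} of Proposition~\ref{prop:best_approximation}. It splits the error into a discrete approximation term $\inf_{y_h}\tnorm{x-y_h}_{\mathcal B}$ and a consistency term $\gamma^{1/2}\|r_h\|_{\Gamma_h}$. Both constants $M$ and $\alpha_{\mathcal B}$ are independent of $h$.

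\emph{Approximation term.} I choose $y_h=(\Pi_h\lambda^\Gamma, I_h u^\Gamma)$ and apply \eqref{eq:approx} with $v=u^\Gamma$ and $\mu=\lambda^\Gamma$. This gives $Ch^{k+1/2}\mathcal M_k(u;\Gamma_h)$. For completeness I would check each of the four components of \eqref{eq:Bnorm_def} against this rate.
\begin{itemize}
\item The $H^{1/2}$ term comes from interpolating \eqref{eq:dirichlet-interpolation} between $L^2$ and $H^1$.
\item The $\gamma^{1/2}L^2$ term uses $\gamma^{1/2}\lesssim h^{-1/2}$ together with the $L^2$ bound $h^{k+1}$.
\item The $H^{-1/2}$ flux term uses \eqref{eq:negative-projection} and \eqref{eq:flux-projection}.
\item The weighted flux term uses $\delta_h^{1/2}\le C_\rho^{1/2}h^{1/2}$ and \eqref{eq:flux-projection}.
\end{itemize}
Each gives $h^{k+1/2}$ times the relevant seminorm.

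\emph{Consistency term.} I combine \eqref{eq:gamma_term} with \eqref{eq:bhL_est} to get $\gamma^{1/2}\|r_h\|_{\Gamma_h}\le C\gamma^{1/2}\delta_h^2\|u\|_{H^3(\Omega)}$. For this I need $\delta_h\le C_\rho h\le C_\rho h_0$, so that the strip $U_{\delta_h}$ lies in the fixed tubular neighbourhood where the strip inequality \eqref{eq:strip-bound} holds with uniform constants. The trace bound $\|D^2u\|_\Gamma\le C\|u\|_{H^3(\Omega)}$ then applies componentwise. The case $\delta_h=0$ is trivial. Inserting both bounds into \eqref{eq:best} yields \eqref{eq:surface-error}.

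\emph{Remaining parts.} The bulk-norm estimate \eqref{eq:main_error} follows by substituting \eqref{eq:trace_combine}, which uses \eqref{eq:trace_bounds}--\eqref{eq:trace-flux}, into \eqref{eq:surface-error}. For \eqref{eq:optimized_rate}, I use $\gamma\le c_\gamma^+h^{-1}$ and \eqref{eq:rho_choice}, which give
\[
\gamma^{1/2}\delta_h^2\le (c_\gamma^+)^{1/2}c_\rho^2\,h^{-1/2}h^{k+1}=Ch^{k+1/2}.
\]
Adding this to the approximation term gives the stated rate. I should note that \eqref{eq:rho_choice} must be compatible with \eqref{eq:penalty_scaling}. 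When $k\ge2$, $\gamma\delta_h\le c_\gamma^+c_\rho h^{(k-1)/2}$ is eventually below $c_0$. When $k=1$, the condition is a restriction on the constants; this is why the theorem says ``subject also to \eqref{eq:penalty_scaling}''.

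\emph{Main obstacle.} The delicate step is the uniformity of the consistency bound. The Taylor remainder lives on rays from the moving surrogate $\Gamma_h$. Controlling it by a bulk $H^3$ norm needs the ray-overlap assumption \eqref{eq:ray-bound} to pass to $\|D^2u\|_{U_{\delta_h}}$. It then needs the strip inequality \eqref{eq:strip-bound}, taken on the fixed boundary $\Gamma$, to gain the extra factor $\delta_h^{1/2}$. I would verify explicitly that these two steps compose with $h$-independent constants, and that the factor $\rho^3/3$ in \eqref{eq:cs_rem} integrates to $\delta_h^3$. Together these produce exactly $\delta_h^{3/2}\cdot\delta_h^{1/2}=\delta_h^2$.
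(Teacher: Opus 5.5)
Your proposal is correct and follows the paper's proof. It takes uniqueness from Proposition~\ref{prop:exist_uniqueness}, inserts \eqref{eq:approx} and \eqref{eq:gamma_term}--\eqref{eq:bhL_est} into \eqref{eq:best}, uses the trace bounds to obtain \eqref{eq:main_error}, and concludes \eqref{eq:optimized_rate} from $\gamma^{1/2}\delta_h^2\lesssim h^{-1/2}h^{k+1}$. Your componentwise check of the approximation term, the tubular-neighbourhood remark and the compatibility of \eqref{eq:rho_choice} with \eqref{eq:penalty_scaling} only make explicit what the paper's proof leaves implicit.
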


\begin{proof}
Existence and uniqueness follow from Proposition~\ref{prop:exist_uniqueness}. Insert \eqref{eq:approx} and \eqref{eq:gamma_term}--\eqref{eq:bhL_est} into \eqref{eq:best} to obtain \eqref{eq:surface-error}. The trace assumptions give \eqref{eq:main_error}. Finally, \(\gamma^{1/2}\delta_h^2\lesssim h^{-1/2}h^{k+1}=h^{k+1/2}\), proving \eqref{eq:optimized_rate}.
\end{proof}

\begin{cor}[\boldmath Rates for \(P_k/P_{k-1}\)]\label{corr:rates}
Under the bulk regularity and all other assumptions of Theorem~\ref{thm:main},
\begin{align}
\tnorm{(\lambda^\Gamma-\lambda_h,u^\Gamma-u_h)}_{\mathcal B}&\le Ch^{3/2}\|u\|_{H^3(\Omega)} && k=1,\quad\delta_h\le c_\rho h \label{eq:rate_k1}\\
\tnorm{(\lambda^\Gamma-\lambda_h,u^\Gamma-u_h)}_{\mathcal B}&\le Ch^{5/2}\|u\|_{H^{7/2}(\Omega)} && k=2,\quad\delta_h\le c_\rho h^{3/2} \label{eq:rate_k2}\\
\tnorm{(\lambda^\Gamma-\lambda_h,u^\Gamma-u_h)}_{\mathcal B}&\le Ch^{7/2}\|u\|_{H^{9/2}(\Omega)} && k=3,\quad\delta_h\le c_\rho h^2 \label{eq:rate_k3}
\end{align}
The constants include the fixed penalty and geometry parameters.
\end{cor}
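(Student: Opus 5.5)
The corollary follows from Theorem~\ref{thm:main}: specialise \eqref{eq:main_error} to each degree and check that the geometry term has the same order as the approximation term. Throughout, \(\gamma\) satisfies \eqref{eq:penalty_scaling}, so \(\gamma\le c_\gamma^+h^{-1}\) and \(\gamma\delta_h\le c_0<4\). The stability constant \(\alpha_{\mathcal B}\) and the continuity constant \(M\) are therefore uniform. This is why the corollary's constants absorb the fixed penalty and geometry parameters \(c_\gamma^\pm\), \(c_\rho\) and \(c_0\).

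\textbf{Geometry term.} With \(\delta_h\le c_\rho h^{(k+1)/2}\),
\[
\gamma^{1/2}\delta_h^2\le (c_\gamma^+)^{1/2}c_\rho^2\,h^{-1/2}h^{k+1}=Ch^{k+1/2}.
\]
For \(k=1,2,3\) this is exactly the offset condition in Table~\ref{tab:design-rates}. For \(k=1\) we have \(\delta_h\le c_\rho h\), which is consistent with \(\delta_h\le C_\rho h\) in \eqref{eq:normal-lift}, and the bound \(\gamma\delta_h\le c_0\) must be imposed through the choice of constants. For \(k\ge2\) we get \(\gamma\delta_h\le c_\gamma^+c_\rho h^{(k-1)/2}\), which is below \(c_0\) once \(h\) is small enough. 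Any larger \(h\le h_0\) is covered by the standing assumption.

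\textbf{Approximation term.} Here \(\|u\|_{H^{k+3/2}(\Omega)}\) must be matched to the norms displayed in the corollary.
\begin{itemize}
\item For \(k=1\), \(H^{k+3/2}=H^{5/2}\), and \(\|u\|_{H^{5/2}(\Omega)}\le\|u\|_{H^3(\Omega)}\). Both terms in \eqref{eq:optimized_rate} are then bounded by \(\|u\|_{H^3(\Omega)}\), which gives \eqref{eq:rate_k1}.
\item For \(k=2,3\), \(k+3/2\ge 7/2>3\). The continuous embedding \(H^{k+3/2}(\Omega)\hookrightarrow H^3(\Omega)\) gives \(\|u\|_{H^3}\le\|u\|_{H^{k+3/2}}\), so the sum in \eqref{eq:optimized_rate} collapses to \(2\|u\|_{H^{k+3/2}(\Omega)}\). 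This yields \eqref{eq:rate_k2} and \eqref{eq:rate_k3}.
\end{itemize}

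\textbf{Main obstacle.} The only delicate point is interpretive rather than analytic. The phrase ``bulk regularity and all other assumptions of Theorem~\ref{thm:main}'' must be read as \(u\in H^{\max(3,k+3/2)}(\Omega)\) together with the uniform trace bounds \eqref{eq:trace_bounds}--\eqref{eq:trace-flux} at degree \(k\), so that \eqref{eq:main_error} applies. For \(k=1\), the trace bounds at \(H^{5/2}\) level are implied by the \(H^3\) assumption. With that reading, no further estimate is required.
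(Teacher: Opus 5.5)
Your proposal is correct and matches how the corollary follows in the paper: the bound \(\gamma^{1/2}\delta_h^2\lesssim h^{-1/2}h^{k+1}\) turns \eqref{eq:main_error} into \eqref{eq:optimized_rate}, and then \(\|u\|_{H^{5/2}(\Omega)}\le\|u\|_{H^3(\Omega)}\) handles \(k=1\) while \(\|u\|_{H^3(\Omega)}\le\|u\|_{H^{k+3/2}(\Omega)}\) handles \(k=2,3\). One caution: your aside that the \(k=1\) trace bounds are implied by \(u\in H^3(\Omega)\) is not justified, since the paper imposes \eqref{eq:trace_bounds}--\eqref{eq:trace-flux} as a separate hypothesis that must hold uniformly over the varying surfaces \(\Gamma_h\); you should simply take them as part of the corollary's assumptions, which is all your argument actually uses.
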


%% file: remark_reduced.tex
\section{Reduced Formulation: Elimination of the Hypersingular Operator}
\label{rem:reduced}
The Taylor correction \eqref{eq:Taylor} may also be used to
\emph{eliminate} the Dirichlet trace instead of merely correcting it.  Since
$u\circ p=g_h$ on $\Gamma_h$, it reads
\begin{equation}
  u^\Gamma=g_h-\rho\lambda^\Gamma-r_h ,
  \label{eq:red-elim}
\end{equation}
so the corrected datum determines $u^\Gamma$ once $\lambda^\Gamma$ is known.
Imposing \eqref{eq:red-elim} strongly --- that is, substituting
$w=g_h-\rho\lambda$ and discarding $r_h$ --- and retaining only the row of
$\mathcal A_h$ in \eqref{eq:abstract_form_Dir} tested with $\mu$, every term
containing $D$ and $K'$ disappears.  Collecting the $\mu$-terms of
\eqref{eq:mult_trace} and \eqref{eq:operator_Dirichlet} gives
\begin{equation}
  \langle V\lambda,\mu\rangle_{\Gamma_h}
  +\big\langle (\tfrac12 I-K)w,\mu\big\rangle_{\Gamma_h}
  +\langle \rho\lambda,\mu\rangle_{\Gamma_h}
  =\langle g_h,\mu\rangle_{\Gamma_h},
  \label{eq:red-row}
\end{equation}
and inserting $w=g_h-\rho\lambda$ into \eqref{eq:red-row} yields the
\emph{reduced formulation}: find $\lambda_h\in\Lambda_{k-1}(\Gamma_h)$ such that
\begin{equation}
  a_\rho(\lambda_h,\mu_h)
  :=\langle V\lambda_h,\mu_h\rangle_{\Gamma_h}
   +\big\langle (\tfrac12 I+K)(\rho\lambda_h),\,\mu_h\big\rangle_{\Gamma_h}
  =\big\langle (\tfrac12 I+K)g_h,\,\mu_h\big\rangle_{\Gamma_h}
  =:\ell_\rho(\mu_h)
  \label{eq:red-method}
\end{equation}
for all $\mu_h\in\Lambda_{k-1}(\Gamma_h)$, after which the Dirichlet trace is
recovered by post-processing,
\begin{equation}
  u_h:=g_h-\rho\,\lambda_h ,
  \label{eq:red-post}
\end{equation}
and the bulk solution from \eqref{eq:rec_disc} with $u_h$ replaced by its
$L^2(\Gamma_h)$-projection onto $S_k(\Gamma_h)$.


Because $\lambda_h$ is only elementwise polynomial, the argument $\rho\lambda_h$
of $K$ is not in $H^{1/2}(\Gamma_h)$ and the mapping property
\eqref{eq:stab_double} is not available.  The reduced method is therefore posed
in $L^2$: the natural space for the flux is the one already introduced in
\eqref{eq:Vtilde},
\begin{equation}
  \lambda\in L^2(\Gamma_h),\qquad
  \Lambda_{k-1}(\Gamma_h)\subset L^2(\Gamma_h)
  \label{eq:red-space}
\end{equation}
and we use that $K$ is bounded on $L^2(\Gamma_h)$,
\begin{equation}
  \|K w\|_{\Gamma_h}\le C_K\|w\|_{\Gamma_h}
  \qquad\forall\,w\in L^2(\Gamma_h)
  \label{eq:red-Kbound}
\end{equation}
which for the piecewise affine $\Gamma_h$ is the Coifman--McIntosh--Meyer bound
for Cauchy-type integrals on Lipschitz graphs; see \cite[Ch.~15]{McLean2000} or
\cite[Ch.~6]{Steinbach2008} for the $L^2$ mapping properties of the layer
potentials on Lipschitz boundaries.  Note that \eqref{eq:red-Kbound} is used
with a constant $C_K$ \emph{independent of $h$}: this holds because the family
$\{\Gamma_h\}$ has a uniform Lipschitz character, $\Gamma_h$ being a
shape-regular piecewise affine interpolant of the smooth surface $\Gamma$.
Compactness of $K$ on $L^2$, available on $\Gamma$, is not available on
$\Gamma_h$ and is not used.

With the natural norm for the reduced method,
\begin{equation}
  \tnorm{\lambda}_\rho:=\|\lambda\|_{H^{-1/2}(\Gamma_h)}
                       +\|\rho^{1/2}\lambda\|_{\Gamma_h}
  \label{eq:red-norm}
\end{equation}
which is exactly the flux part of $\tnorm{\cdot}_{\mathcal B}$ in
\eqref{eq:Bnorm_def}, the offset term is continuous without any inverse
estimate, provided $\rho$ is quasi-uniform on $\Gamma_h$:
\begin{multline}
  \big|\big\langle(\tfrac12 I+K)(\rho\lambda),\mu\big\rangle_{\Gamma_h}\big|
  \le(\tfrac12+C_K)\|\rho\lambda\|_{\Gamma_h}\|\mu\|_{\Gamma_h}
  \le C\,\|\rho^{1/2}\lambda\|_{\Gamma_h}\|\rho^{1/2}\mu\|_{\Gamma_h}
\\  \le C\,\tnorm{\lambda}_\rho\,\tnorm{\mu}_\rho 
  \label{eq:red-cont}
\end{multline}
For coercivity the two parts of $\tfrac12 I+K$ play different roles.  Since
$\Gamma_h\subset\Omega$ we have $\rho\ge0$, so the identity part is nonnegative
and contributes $\tfrac12\|\rho^{1/2}\mu_h\|_{\Gamma_h}^2$, while the
indefinite part is absorbed by the ellipticity of $V$, \eqref{eq:uniform-v},
after \eqref{eq:red-Kbound} and the discrete flux inverse estimate
\eqref{eq:inverse_ineq} of Lemma~\ref{lem:app-inverse}, written as
$\|\mu_h\|_{\Gamma_h}\le C\gamma^{1/2}\|\mu_h\|_{H^{-1/2}(\Gamma_h)}$,
\begin{equation}
  \big|\langle K(\rho\mu_h),\mu_h\rangle_{\Gamma_h}\big|
  \le C_K\delta_h\|\mu_h\|_{\Gamma_h}^{2}
  \le C_K C^2\,\gamma\delta_h\,\|\mu_h\|_{H^{-1/2}(\Gamma_h)}^{2}
  \label{eq:red-absorb}
\end{equation}
so that for all $\mu_h\in\Lambda_{k-1}(\Gamma_h)$
\begin{equation}
  a_\rho(\mu_h,\mu_h)
  \;\ge\;\big(c_V-C_KC^2\gamma\delta_h\big)
          \|\mu_h\|_{H^{-1/2}(\Gamma_h)}^{2}
        +\tfrac12\|\rho^{1/2}\mu_h\|_{\Gamma_h}^{2}
  \;\gtrsim\;\tnorm{\mu_h}_\rho^{2}
  \label{eq:red-coerc}
\end{equation}
provided $\gamma\delta_h\le c_0$ with $c_0$ small enough --- the same smallness
condition as in \eqref{eq:penalty_scaling}, and implied by the design condition
\eqref{eq:rho_choice} for every $k\ge1$.

Consistency is again governed by the Taylor remainder alone: by
\eqref{eq:Taylor} the exact flux satisfies
$a_\rho(\lambda^\Gamma,\mu)=\ell_\rho(\mu)
-\langle(\tfrac12 I+K)r_h,\mu\rangle_{\Gamma_h}$ with $r_h$ from
\eqref{eq:rem-def}, so no new consistency error is introduced.  The remainder is
bounded exactly as in Proposition~\ref{prop:best_approximation}, using
\eqref{eq:red-Kbound}, Lemma~\ref{lem:app-inverse} and
\eqref{eq:gamma_term}--\eqref{eq:bhL_est},
\begin{multline}
  \big|\langle(\tfrac12 I+K)r_h,\mu_h\rangle_{\Gamma_h}\big|
  \le C\|r_h\|_{\Gamma_h}\|\mu_h\|_{\Gamma_h}
  \le C\gamma^{1/2}\|r_h\|_{\Gamma_h}\,\|\mu_h\|_{H^{-1/2}(\Gamma_h)}
\\  \le C\gamma^{1/2}\delta_h^{2}\|u\|_{H^{3}(\Omega)}\,
       \|\mu_h\|_{H^{-1/2}(\Gamma_h)}
  \label{eq:red-consist}
\end{multline}
so the geometry error enters with the same factor $\gamma^{1/2}\delta_h^2$ as in
\eqref{eq:surface-error}.  With the approximation estimates of
\S\ref{subsec:approximation-rates}, the first Strang lemma then gives
\begin{equation}
  \tnorm{\lambda^\Gamma-\lambda_h}_\rho
  \;\le\;C\Big(h^{k+1/2}\mathcal M_k(u;\Gamma_h)
         +\gamma^{1/2}\delta_h^{2}\|u\|_{H^{3}(\Omega)}\Big)
  \label{eq:red-error}
\end{equation}
which is the flux part of \eqref{eq:surface-error}; under the design condition
\eqref{eq:rho_choice} the reduced method therefore reproduces the rates of
Table~\ref{tab:design-rates} and Corollary~\ref{corr:rates} at a fraction of the
assembly cost, and by \eqref{eq:red-post},
$\|u^\Gamma-u_h\|_{\Gamma_h}\le\delta_h\|\lambda^\Gamma-\lambda_h\|_{\Gamma_h}
+\|r_h\|_{\Gamma_h}$.  The price is that the Dirichlet trace is no longer an
independent unknown: its accuracy is tied to that of $\lambda_h$ through
\eqref{eq:red-post}.


%% file: numerical_experiments_v7.tex

\section{Numerical Experiments}
\label{sec:numerics}

We examine convergence on a sphere and an ellipsoid, sensitivity to the penalty and offset parameters, and the behaviour of exterior surrogates. All computations use flat panels. 
 We also compare the implementation with an independent boundary element library on a coarse mesh. 

\subsection{Computational Setting}
\label{sec:num-setup}

\paragraph{Geometry and Calibration}\label{par:num-geometries}
We consider two choices of \(\Gamma\): the unit sphere and the triaxial ellipsoid
\(x_1^2/a^2+x_2^2/b^2+x_3^2/c^2=1\) with semi-axis vector \(\mathbf a=(a,b,c)=(1,1.3,0.7)\). The surrogate
\(\Gamma_h\) is an icosahedral triangulation whose vertices lie on a scaled copy
\(\sigma\Gamma\) of the true surface; the scale \(\sigma<1\) is chosen by bisection so that the
area mean of the lift distance,
\begin{equation}
  \bar\rho \;=\; |\Gamma_h|^{-1}\int_{\Gamma_h}\rho\,\mathrm{d}s  \label{eq:num-rhobar}
\end{equation}
matches a prescribed target. The offset \(\rho(x)\) is evaluated as the
signed distance from \(x\in\Gamma_h\) to \(\Gamma\) along \(\mathbf n_h\), obtained from the
quadratic \(\sum_i (x_i+\rho\,\mathbf n_{h,i})^2/a_i^2 = 1\). The tables give the actual mesh size \(h\) and mean offset \(\bar\rho\).
Let \(h_{\mathrm{ref}}\) denote the maximum panel diameter before scaling, so that the actual mesh size is \(h=\sigma h_{\mathrm{ref}}\). The following design parameters are used
\begin{equation}
\bar\rho_{\mathrm{target}}=c_\rho h_{\mathrm{ref}}^{(k+1)/2},\qquad\gamma=c_\gamma h^{-1},\qquad c_\rho=0.3,\quad c_\gamma=1 \label{eq:num-design}
\end{equation}

The theorem requires a bound on the maximum offset \(\delta_h\), which cannot be determined from quadrature samples alone. On a panel \(T\) with outward normal \(\mathbf n_T\), set \(d_T=\mathbf n_T\cdot x\) for \(x\in T\). The ellipsoid support function gives the bound
\begin{equation}
\delta_h\le\max_T\left(\sqrt{\sum_{i=1}^3 a_i^2 n_{T,i}^2}-d_T\right)=:\delta_h^{\mathrm{ub}} \label{eq:num-offset-bound}
\end{equation}
To obtain this bound, note that the lifted point lies on the ellipsoid, so its scalar product with \(\mathbf n_T\) is at most the corresponding support value. For all eighteen interior baseline geometries in Tables~\ref{tab:sphere} and~\ref{tab:ellipsoid}, direct evaluation gives \(\gamma\delta_h^{\mathrm{ub}}<0.54\) and \(\delta_h^{\mathrm{ub}}/\bar\rho<1.53\). Thus the listed geometries satisfy the offset smallness condition. Uniformity over the whole mesh family remains an assumption, as specified in Section~\ref{subsec:standing-assumptions}.

\paragraph{Manufactured Solution and Error Measures}\label{par:num-errors}
We manufacture the data from
\begin{equation}
  u(x) \;=\; e^{x_3}\cos x_1 \;+\; |x-x_s|^{-1},
  \qquad x_s=(1.6,\,1.7,\,1.5) \label{eq:num-manufactured}
\end{equation}
which is harmonic away from \(x_s\), including a neighbourhood of \(\overline\Omega\). Errors are measured against the exact traces on the surrogate,
\begin{equation}
  u^\Gamma = u|_{\Gamma_h},
  \qquad
  \lambda^\Gamma = \nabla u\cdot \mathbf n_h|_{\Gamma_h} \label{eq:num-exact-traces}
\end{equation}
using the convention in \eqref{eq:surrogate-traces}. Define the trace errors by
\begin{equation}
(e_\lambda,e_u):=(\lambda^\Gamma-\lambda_h,u^\Gamma-u_h) \label{eq:num-trace-errors}
\end{equation}
We approximate the Calder\'on--penalty norm \eqref{eq:Bnorm_def} using the operator proxies
\begin{equation}
  \|\lambda\|_{H^{-1/2}(\Gamma_h)}^2 \simeq \langle V\lambda,\lambda\rangle_{\Gamma_h},
  \qquad
  \|u\|_{H^{1/2}(\Gamma_h)}^2 \simeq \langle Du,u\rangle_{\Gamma_h} + \|u\|_{\Gamma_h}^2  \label{eq:num-proxies}
\end{equation}
For error evaluation, the traces are represented by nodal polynomials of degree \(q=k+1\). Let \(I_q^{\mathrm{disc}}\) and \(I_q^{\mathrm{cont}}\) denote interpolation at the principal lattice nodes in the discontinuous and continuous degree-\(q\) spaces, respectively. The represented errors are
\begin{equation}
(\widehat e_\lambda,\widehat e_u):=(I_q^{\mathrm{disc}}\lambda^\Gamma-\lambda_h,I_q^{\mathrm{cont}}u^\Gamma-u_h) \label{eq:num-interpolated-errors}
\end{equation}
The Dirichlet interpolant agrees across shared edges and is therefore conforming for the hypersingular form. Interpolation represents the discrete solution exactly; only the nonpolynomial exact data are approximated.

Let \(Q_V\) and \(Q_D\) denote quadrature evaluations of the operator quadratic forms, and let \(Q_0\) denote panel quadrature for local integrals. With \(z_+=\max\{z,0\}\), the implemented error measure is
\begin{align}
E_{\mathrm{rep}}&:=Q_V(\widehat e_\lambda)_+^{1/2}+\big(Q_D(\widehat e_u)_++Q_0(\widehat e_u^2)\big)^{1/2} \label{eq:num-exterior-error}\\
&\quad+\gamma^{1/2}Q_0(\widehat e_u^2)^{1/2}+Q_0(|\rho|\widehat e_\lambda^2)^{1/2} \notag
\end{align}
For interior surrogates, \(|\rho|=\rho\), so this is the computed approximation to \eqref{eq:Bnorm_def} used in the tables. 

We also report the Dirichlet \(L^2(\Gamma_h)\) error and a relative potential error at \(200\) pseudorandom points. The latter is the root-mean-square error in the reconstruction \eqref{eq:rec_disc}, divided by the root-mean-square exact potential at the same points. For each geometry, all experiments use the same 200 points, sampled uniformly in \(0.6\Omega\) with seed zero. Checks against the panel halfspaces confirm that all observation points lie strictly inside every tested surrogate. 

\paragraph{Implementation}\label{par:num-implementation}
The boundary integral operators are assembled on flat panels for \(P_k/P_{k-1}\), \(k=1,2,3\). The hypersingular form is evaluated through the surface-curl identity \cite[Proposition 4.1.44]{SauterSchwab2011}
\begin{equation}
  \langle Du,v\rangle_{\Gamma_h}
  \;=\;\int_{\Gamma_h}\!\int_{\Gamma_h} G_0(x,y)\,
  \operatorname{curl}_{\Gamma_h}\! u(y)\cdot\operatorname{curl}_{\Gamma_h}\! v(x)\,
  \mathrm{d}s_y\,\mathrm{d}s_x  \label{eq:num-nedelec}
\end{equation}
This representation uses the weakly singular kernel \(G_0\), and \(K'\) is assembled as the Galerkin transpose of \(K\). For singular and near-singular pairs, the source panel is divided into three triangles from its closest point to the evaluation point. A Duffy transformation cancels the \(1/r\) singularity on coincident panels. Radial and angular grading are used for near-singular interactions, including those with an apex close to an edge.

We use the generalised minimal residual method (GMRES) \cite{SaadSchultz1986} with relative tolerance \(10^{-10}\) and the block-diagonal preconditioner \(\operatorname{diag}(V+M_\rho,D+\gamma M_u)\). Here \(M_u\) is the Dirichlet mass matrix and \(M_\rho\) is the flux mass matrix with entries \(\int_{\Gamma_h}\rho\psi_i\psi_j\,dS\), where \(\psi_i\) are flux basis functions. The spatially varying weight is retained in the assembled matrix. The lower-left block likewise uses the full mixed penalty coefficient in \eqref{eq:operator_Dirichlet}.

An independent comparison with Bempp-cl~0.4.2 \cite{BetckeScroggs2021Bempp} on the same inscribed sphere mesh with \(80\) panels and \(P_1/P_0\) spaces gives relative Frobenius differences below \(2.1\cdot10^{-5}\) for \(V\), \(K\), and \(D\) using the fine assembly quadrature. The constant-mode residual for \(D\) is below \(2\cdot10^{-15}\) relative to its matrix norm. An affine manufactured solution, for which the Taylor remainder vanishes, gives a relative consistency residual of \(8.6\cdot10^{-6}\) with the production rules; the solved linear-system residual is below \(10^{-10}\). These checks support the operator signs and boundary correction on the tested configurations.

The solver uses restarted GMRES with restart length \(200\) and at most \(25\) restart cycles, hence at most \(5000\) inner iterations. If GMRES does not reach the tolerance, a direct solve supplies the solution used for error measurement. Each such fallback is marked by an asterisk in the tables.

\subsection{Baseline Convergence on a Sphere}
\label{sec:num-sphere}

Table~\ref{tab:sphere} and Figure~\ref{fig:sphere} show convergence for \(k=1,2,3\) under \eqref{eq:num-design}. The least-squares slopes are \(1.45\), \(2.74\) and \(3.40\), compared with the predicted orders \(1.5\), \(2.5\) and \(3.5\). For cubic approximation, the rate over the last refinement is \(3.11\); three mesh levels are insufficient to establish asymptotic convergence. At the design exponent, the approximation and geometry terms in \eqref{eq:main_error} have the same order in \(h\). GMRES requires between \(11\) and \(19\) iterations, without direct fallback.

Table~\ref{tab:components} gives all four contributions to \(E_{\mathrm{rep}}\) and the sampled potential error. Figure~\ref{fig:sphere-components} plots the two Sobolev trace errors and the unweighted Dirichlet \(L^2\) error, which exhibit different rates. Both weighted penalty contributions appear in the table but are omitted from the figure.

In the tables, \(N_T\) is the number of panels, \(N_{\mathrm{dof}}\) the total number of degrees of freedom, and ``eoc'' the experimental order of convergence between consecutive meshes, computed from their actual diameters. Least-squares slopes use all listed levels.

\input{sphere_convergence_v3}

\begin{figure}[htbp]
  \centering
  \includegraphics[width=0.75\textwidth]{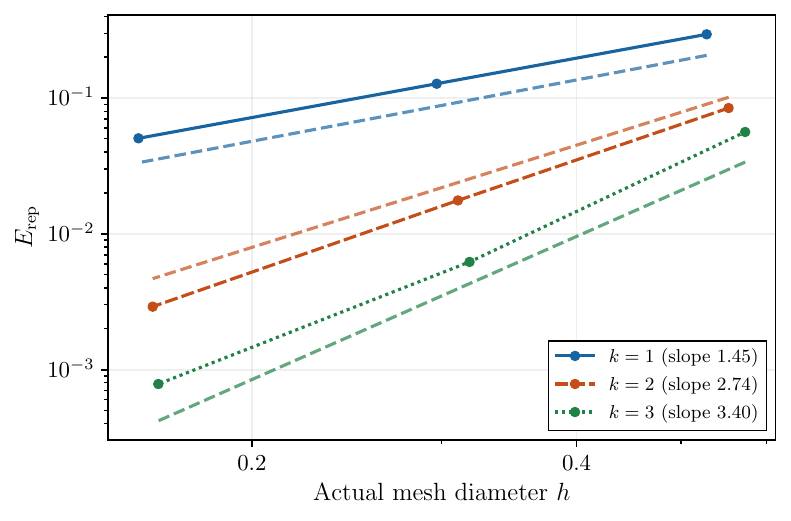}
  \caption{Sphere: the computed error \(E_{\mathrm{rep}}\) from \eqref{eq:num-exterior-error} against \(h\) for \(P_k/P_{k-1}\),
  \(k=1,2,3\), with the nominal design offsets \(\rho\simeq h,\,h^{3/2},\,h^2\). The dashed lines
  are the predicted slopes \(h^{3/2}\), \(h^{5/2}\), \(h^{7/2}\).}
  \label{fig:sphere}
\end{figure}

\input{sphere_error_components_v4}

\begin{figure}[htbp]
  \centering
  \includegraphics[width=0.49\textwidth]{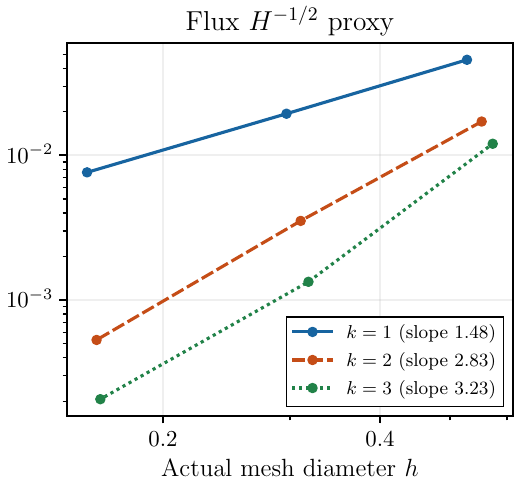}
  \includegraphics[width=0.49\textwidth]{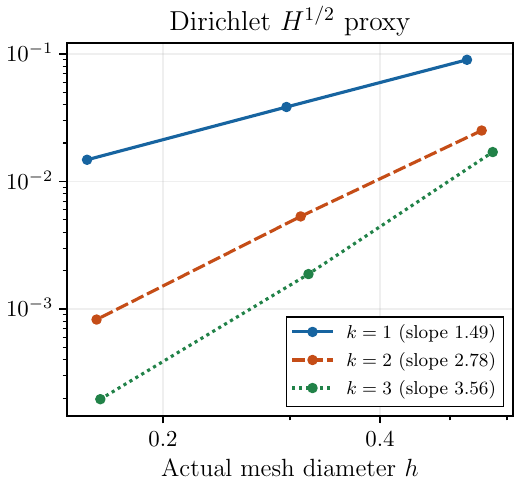}

  \includegraphics[width=0.49\textwidth]{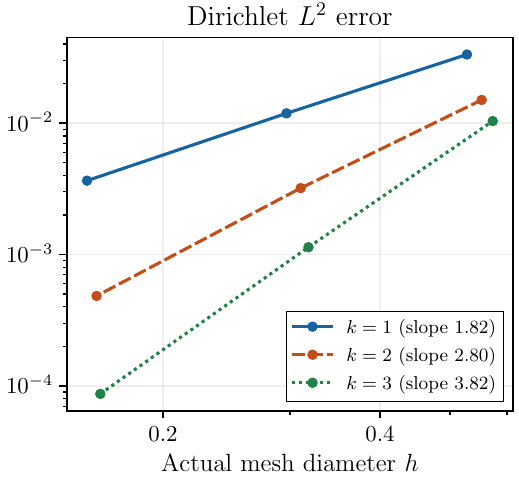}
  \caption{Sphere: flux error in \(H^{-1/2}(\Gamma_h)\) (upper left), Dirichlet error in \(H^{1/2}(\Gamma_h)\) (upper right), and unweighted Dirichlet error in \(L^2(\Gamma_h)\) (lower panel), measured against the exact surrogate traces. The first two use the operator proxies \eqref{eq:num-proxies}. Legends give least-squares slopes.}
  \label{fig:sphere-components}
\end{figure}

\FloatBarrier

\subsection{Convergence on an Ellipsoid}
\label{sec:num-ellipsoid}

Table~\ref{tab:ellipsoid} and Figure~\ref{fig:ellipsoid} extend the convergence study to a triaxial ellipsoid. The fitted slopes are \(1.45\), \(2.88\) and \(3.92\), and GMRES requires \(15\)--\(19\) iterations without direct fallback. The geometric checks in Section~\ref{sec:num-setup} verify the offset condition for these meshes; uniformity over the refinement family remains a hypothesis.

\input{ellipsoid_convergence_v3}

\begin{figure}[htbp]
  \centering
  \includegraphics[width=0.75\textwidth]{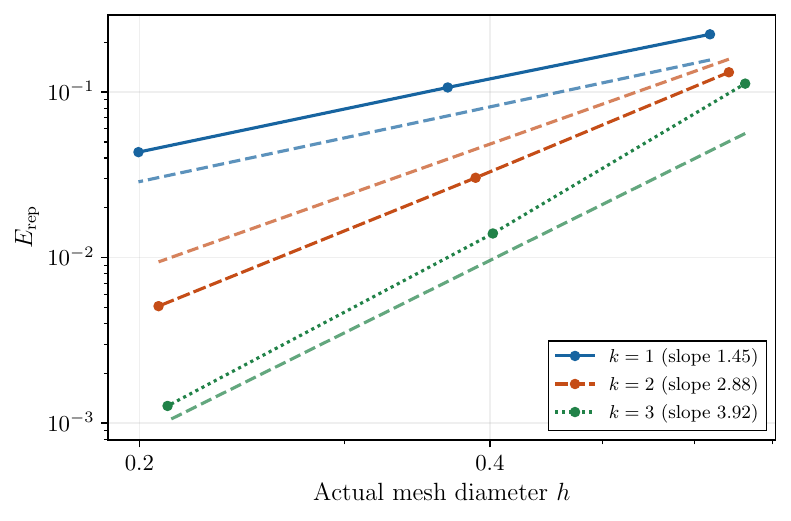}
  \caption{Ellipsoid \(a=1\), \(b=1.3\), \(c=0.7\): computed error \(E_{\mathrm{rep}}\) against \(h\), with the nominal design offsets. Dashed lines show the predicted \(h^{k+1/2}\) slopes.}
  \label{fig:ellipsoid}
\end{figure}

\FloatBarrier

\subsection{Sweep in the Penalty Parameter}
\label{sec:num-gamma}

The penalty sweep uses the same degree-dependent assembly quadrature and error evaluation as the baseline experiments. Cases shared with the baseline experiments are computed once and reused in the tables and figures.

To assess sensitivity to the penalty scaling
\eqref{eq:num-gamma}, we fix the mesh (\(N_T=320\)) and vary \(c_\gamma\) over
\(\{0.25,0.5,1,2,4\}\); the mesh and quadrature rules are held fixed for each degree, so
the comparison isolates the effect of the penalty. Table~\ref{tab:gamma} and
Figure~\ref{fig:gamma} show a plateau: over a sixteen-fold change in \(c_\gamma\) the
error varies by at most a factor \(1.6\) and the iteration count lies in the range
\(10\)--\(23\). The counts generally decrease as the penalty grows, with a small reversal for \(k=1\). The largest value of \(\gamma\delta_h^{\mathrm{ub}}\) in these configurations is below \(1.39\), so the offset smallness condition holds throughout this sweep. Part of the slight increase in
\(E_{\mathrm{rep}}\) with \(c_\gamma\) comes from the weighted Dirichlet contribution
\(\gamma^{1/2}\|u^\Gamma-u_h\|_{\Gamma_h}\): the unweighted error
\(\|u^\Gamma-u_h\|_{\Gamma_h}\) is essentially flat for \(k=2,3\) and varies by \(15\%\) for
\(k=1\).

\input{penalty_sweep_v3}

\begin{figure}[htbp]
  \centering
  \includegraphics[width=0.70\textwidth]{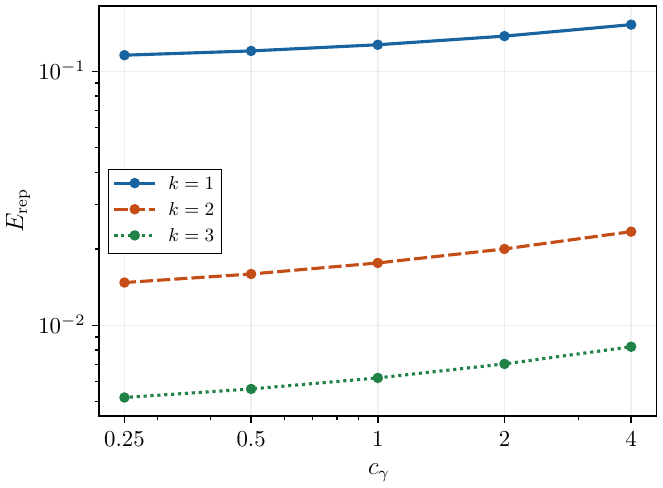}

  \includegraphics[width=0.70\textwidth]{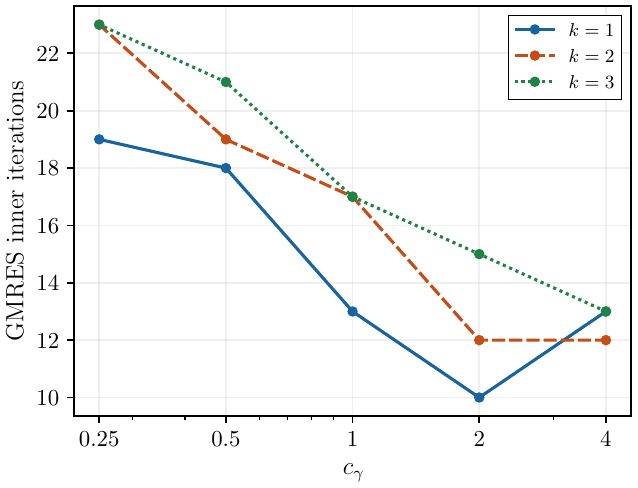}
  \caption{Penalty sweep at fixed mesh for each \(k\), with \(N_T=320\): error (upper panel) and preconditioned generalised minimal residual (GMRES)
  iterations (lower panel) against \(c_\gamma\).}
  \label{fig:gamma}
\end{figure}

\FloatBarrier

\subsection{Sweep in the Offset}
\label{sec:num-rho}

We fix the number of panels and vary the nominal exponent over \(s=1,1.25,\ldots,2.5\). Changing the offset changes the surrogate geometry, its diameter and its operator matrices. Table~\ref{tab:rho} shows an almost flat total error for \(k=1\), while the reported errors for \(k=2,3\) continue to decrease beyond the design exponent. The estimate \eqref{eq:main_error} permits such behaviour: balancing the orders does not equate the constants, and the design exponent gives a sufficient asymptotic condition without necessarily minimising the error at fixed resolution. The total errors do not separate the approximation and geometry contributions. The sampled potential error is more sensitive to the offset than the total error measure in these runs.

\input{offset_sweep_v4}

\begin{figure}[htbp]
  \centering
  \includegraphics[width=0.75\textwidth]{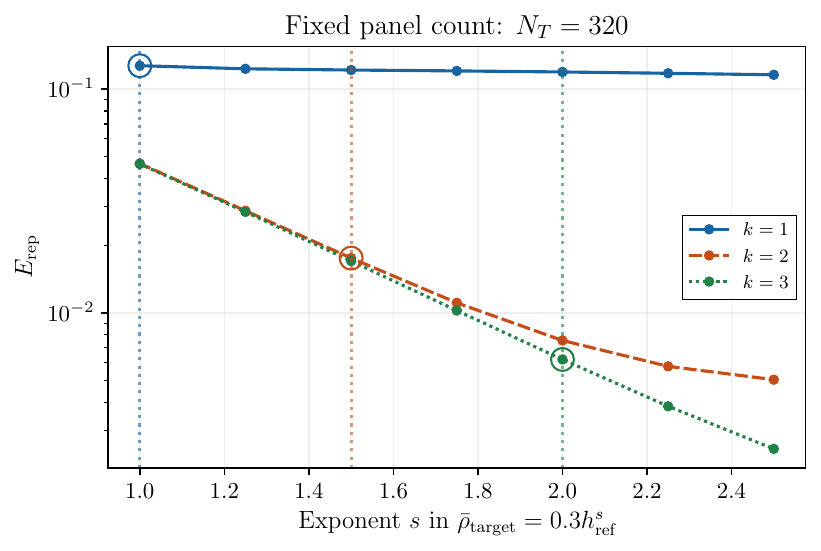}
  \caption{\(\rho\)-sweep at fixed panel count \(N_T=320\); the actual mesh diameter changes with the scale. The dashed vertical lines and rings mark the
  design exponent \(s=(k+1)/2\) for each \(k\).}
  \label{fig:rho}
\end{figure}

\FloatBarrier

\subsection{Exterior Surrogate Diagnostic}
\label{sec:num-outside}

Table~\ref{tab:outside} compares interior and exterior surrogates. Moving the vertices beyond the physical boundary does not ensure that entire flat panels lie outside. For all six exterior configurations, we verified that the panels lie outside using panel-plane distances in coordinates scaled by the ellipsoid axes. The manufactured solution \eqref{eq:num-manufactured} supplies a harmonic extension wherever the enlarged domain avoids \(x_s\). These special data do not justify exterior surrogates for general Dirichlet problems, whose solutions need not admit a sufficiently regular harmonic extension. Moreover, negative \(\rho\) invalidates the nonnegative-penalty argument of Lemma~\ref{lem:app-coercivity}. The experiment is therefore outside the scope of the convergence theory.

For \(k=1\), the exterior errors in Table~\ref{tab:outside} are nonmonotone. For \(k=2\), they decrease from \(2.13\) to \(0.0616\) and then \(0.00811\), with refinement rates approximately \(4.68\) and \(2.81\). These results use \(E_{\mathrm{rep}}\) from \eqref{eq:num-exterior-error}, with \(|\rho|\) in the flux term. For negative offsets, this is a diagnostic rather than the norm in the interior coercivity theorem.

Three exterior configurations reach the cap of 5000 inner iterations and require a direct solve: \(k=1\) with \(N_T=1280\), \(k=2\) with \(N_T=320\), and \(k=2\) with \(N_T=1280\). Their tabulated errors are those of the direct solutions; the asterisks indicate that the iterative solves failed. The other exterior cases converge iteratively. Moving the surface changes the boundary integral matrices as well as the sign of the correction weight, so this comparison does not isolate a single coercivity mechanism.

\input{exterior_surrogate_v5}

\FloatBarrier

\subsection{Penalty Consistency and Quadrature Sensitivity}
\label{sec:num-remarks}

\begin{rem}[Penalty coefficient check]
\label{rem:num-factor2}
The full mixed penalty coefficient in \eqref{eq:operator_Dirichlet} is needed to cancel the first-order boundary displacement in the exact-solution residual. To illustrate the effect of losing this cancellation, Table~\ref{tab:variant} compares the implemented penalty with a deliberately inconsistent half-coefficient form while keeping the load and all other settings fixed. This is a consistency diagnostic, rather than a proposed alternative method. The modified form is
\begin{equation}
\mathcal B_D^{\mathrm{half}}[(\lambda,w),(\mu,v)]:=\mathcal B_D[(\lambda,w),(\mu,v)]-\tfrac12\langle\gamma\rho\lambda,v\rangle_{\Gamma_h} \label{eq:BD-half}
\end{equation}
Thus the form used in the principal experiments is
\begin{equation}
\mathcal B_D^{\mathrm{consistent}}:=\mathcal B_D \label{eq:BD-consistent}
\end{equation}
The half-coefficient form with the unchanged load has an additional exact-solution residual \(-\tfrac12\langle\gamma\rho\lambda^\Gamma,v\rangle_{\Gamma_h}\). Its dual norm is bounded by \(C\gamma^{1/2}\delta_h\|\lambda^\Gamma\|_{\Gamma_h}\). With the design offset and uniformly bounded exact flux, this contributes at most order \(h^{k/2}\) to the error bound. This is an upper estimate, not a universal lower bound on the attainable convergence rate. The Corrected and Half columns of Table~\ref{tab:variant} report \(E_{\mathrm{rep}}\) for the two forms. The half-coefficient errors are larger on every listed mesh and decrease more slowly. The fitted slopes \(0.52\) and \(0.92\) are compatible with orders \(1/2\) and \(1\). This deliberately inconsistent comparison is distinct from the consistent alternative \eqref{eq:Bform2}.
\end{rem}

\input{penalty_comparison_v4}



\FloatBarrier

%% file: sphere_convergence_v3.tex
\begin{table}[htbp]
\centering
\footnotesize
\setlength{\tabcolsep}{3.5pt}
\begin{tabular}{r r r r r r r r r }
\toprule
\(k\) & \(N_T\) & \(N_{\mathrm{dof}}\) & \(h\) & \(\bar\rho\) & \(E_{\mathrm{rep}}\) & eoc & \(\|e_u\|_{\Gamma_h}\) & GMRES \\
\midrule
1 & 80 & 122 & 0.5280 & \(1.85\cdot 10^{-1}\) & \(2.94\cdot 10^{-1}\) & --- & \(3.33\cdot 10^{-2}\) & 14 \\
1 & 320 & 482 & 0.2967 & \(9.75\cdot 10^{-2}\) & \(1.27\cdot 10^{-1}\) & 1.45 & \(1.19\cdot 10^{-2}\) & 13 \\
1 & 1280 & 1922 & 0.1570 & \(4.94\cdot 10^{-2}\) & \(5.05\cdot 10^{-2}\) & 1.45 & \(3.65\cdot 10^{-3}\) & 11 \\
2 & 80 & 402 & 0.5534 & \(1.46\cdot 10^{-1}\) & \(8.43\cdot 10^{-2}\) & --- & \(1.50\cdot 10^{-2}\) & 16 \\
2 & 320 & 1602 & 0.3105 & \(5.56\cdot 10^{-2}\) & \(1.76\cdot 10^{-2}\) & 2.71 & \(3.20\cdot 10^{-3}\) & 17 \\
2 & 1280 & 6402 & 0.1618 & \(2.00\cdot 10^{-2}\) & \(2.91\cdot 10^{-3}\) & 2.76 & \(4.84\cdot 10^{-4}\) & 15 \\
3 & 80 & 842 & 0.5733 & \(1.15\cdot 10^{-1}\) & \(5.62\cdot 10^{-2}\) & --- & \(1.04\cdot 10^{-2}\) & 19 \\
3 & 320 & 3362 & 0.3183 & \(3.17\cdot 10^{-2}\) & \(6.21\cdot 10^{-3}\) & 3.74 & \(1.14\cdot 10^{-3}\) & 17 \\
3 & 1280 & 13442 & 0.1638 & \(8.13\cdot 10^{-3}\) & \(7.84\cdot 10^{-4}\) & 3.11 & \(8.69\cdot 10^{-5}\) & 18 \\
\bottomrule
\end{tabular}
\caption{Sphere convergence. \(E_{\mathrm{rep}}\) is the computed Calder\'on--penalty error proxy against exact surrogate traces; \(k\) is the Dirichlet degree, \(N_T\) counts panels, \(N_{\mathrm{dof}}\) counts unknowns, \(h\) is the actual maximum diameter, and \(\bar\rho\) is the mean offset. The GMRES (generalised minimal residual method) column gives inner iterations at relative tolerance \(10^{-10}\). All listed solves converge iteratively. Here eoc is the experimental order of convergence between consecutive meshes.}
\label{tab:sphere}
\end{table}

%% file: sphere_error_components_v4.tex
\begin{table}[htbp]
\centering
\footnotesize
\setlength{\tabcolsep}{3.5pt}
\begin{tabular}{r r r r r r r }
\toprule
\(k\) & \(N_T\) & \(E_V\) & \(E_D\) & \(E_\gamma\) & \(E_\rho\) & Potential \\
\midrule
1 & 80 & \(4.57\cdot 10^{-2}\) & \(9.00\cdot 10^{-2}\) & \(4.59\cdot 10^{-2}\) & \(1.13\cdot 10^{-1}\) & \(9.19\cdot 10^{-4}\) \\
1 & 320 & \(1.94\cdot 10^{-2}\) & \(3.84\cdot 10^{-2}\) & \(2.18\cdot 10^{-2}\) & \(4.78\cdot 10^{-2}\) & \(3.55\cdot 10^{-4}\) \\
1 & 1280 & \(7.62\cdot 10^{-3}\) & \(1.48\cdot 10^{-2}\) & \(9.21\cdot 10^{-3}\) & \(1.88\cdot 10^{-2}\) & \(1.03\cdot 10^{-4}\) \\
2 & 80 & \(1.71\cdot 10^{-2}\) & \(2.51\cdot 10^{-2}\) & \(2.02\cdot 10^{-2}\) & \(2.19\cdot 10^{-2}\) & \(9.82\cdot 10^{-4}\) \\
2 & 320 & \(3.52\cdot 10^{-3}\) & \(5.32\cdot 10^{-3}\) & \(5.75\cdot 10^{-3}\) & \(3.02\cdot 10^{-3}\) & \(1.47\cdot 10^{-4}\) \\
2 & 1280 & \(5.30\cdot 10^{-4}\) & \(8.26\cdot 10^{-4}\) & \(1.20\cdot 10^{-3}\) & \(3.51\cdot 10^{-4}\) & \(1.96\cdot 10^{-5}\) \\
3 & 80 & \(1.20\cdot 10^{-2}\) & \(1.70\cdot 10^{-2}\) & \(1.37\cdot 10^{-2}\) & \(1.35\cdot 10^{-2}\) & \(6.05\cdot 10^{-4}\) \\
3 & 320 & \(1.34\cdot 10^{-3}\) & \(1.88\cdot 10^{-3}\) & \(2.02\cdot 10^{-3}\) & \(9.82\cdot 10^{-4}\) & \(4.81\cdot 10^{-5}\) \\
3 & 1280 & \(2.07\cdot 10^{-4}\) & \(1.96\cdot 10^{-4}\) & \(2.15\cdot 10^{-4}\) & \(1.67\cdot 10^{-4}\) & \(3.79\cdot 10^{-6}\) \\
\bottomrule
\end{tabular}
\caption{Sphere: all four contributions to \(E_{\mathrm{rep}}=E_V+E_D+E_\gamma+E_\rho\). They are the flux \(H^{-1/2}\) proxy, Dirichlet \(H^{1/2}\) proxy, weighted Dirichlet error \(\gamma^{1/2}\|e_u\|_{\Gamma_h}\), and weighted flux error \(\||\rho|^{1/2}e_\lambda\|_{\Gamma_h}\). Potential is the relative root-mean-square error at 200 points drawn uniformly in \(0.6\Omega\) with seed zero.}
\label{tab:components}
\end{table}

%% file: ellipsoid_convergence_v3.tex
\begin{table}[htbp]
\centering
\footnotesize
\setlength{\tabcolsep}{3.5pt}
\begin{tabular}{r r r r r r r r r }
\toprule
\(k\) & \(N_T\) & \(N_{\mathrm{dof}}\) & \(h\) & \(\bar\rho\) & \(E_{\mathrm{rep}}\) & eoc & \(\|e_u\|_{\Gamma_h}\) & GMRES \\
\midrule
1 & 80 & 122 & 0.6183 & \(2.41\cdot 10^{-1}\) & \(2.24\cdot 10^{-1}\) & --- & \(2.58\cdot 10^{-2}\) & 17 \\
1 & 320 & 482 & 0.3680 & \(1.27\cdot 10^{-1}\) & \(1.07\cdot 10^{-1}\) & 1.42 & \(1.15\cdot 10^{-2}\) & 15 \\
1 & 1280 & 1922 & 0.1996 & \(6.42\cdot 10^{-2}\) & \(4.34\cdot 10^{-2}\) & 1.47 & \(3.87\cdot 10^{-3}\) & 15 \\
2 & 80 & 402 & 0.6418 & \(2.16\cdot 10^{-1}\) & \(1.32\cdot 10^{-1}\) & --- & \(2.17\cdot 10^{-2}\) & 17 \\
2 & 320 & 1602 & 0.3889 & \(8.24\cdot 10^{-2}\) & \(3.04\cdot 10^{-2}\) & 2.93 & \(5.37\cdot 10^{-3}\) & 17 \\
2 & 1280 & 6402 & 0.2077 & \(2.97\cdot 10^{-2}\) & \(5.09\cdot 10^{-3}\) & 2.85 & \(8.72\cdot 10^{-4}\) & 16 \\
3 & 80 & 842 & 0.6628 & \(1.94\cdot 10^{-1}\) & \(1.13\cdot 10^{-1}\) & --- & \(1.89\cdot 10^{-2}\) & 19 \\
3 & 320 & 3362 & 0.4024 & \(5.35\cdot 10^{-2}\) & \(1.40\cdot 10^{-2}\) & 4.18 & \(2.54\cdot 10^{-3}\) & 19 \\
3 & 1280 & 13442 & 0.2115 & \(1.37\cdot 10^{-2}\) & \(1.27\cdot 10^{-3}\) & 3.73 & \(2.01\cdot 10^{-4}\) & 19 \\
\bottomrule
\end{tabular}
\caption{Ellipsoid convergence. \(E_{\mathrm{rep}}\) is the computed Calder\'on--penalty error proxy against exact surrogate traces; \(k\) is the Dirichlet degree, \(N_T\) counts panels, \(N_{\mathrm{dof}}\) counts unknowns, \(h\) is the actual maximum diameter, and \(\bar\rho\) is the mean offset. The GMRES (generalised minimal residual method) column gives inner iterations at relative tolerance \(10^{-10}\). All listed solves converge iteratively. Here eoc is the experimental order of convergence between consecutive meshes.}
\label{tab:ellipsoid}
\end{table}

%% file: penalty_sweep_v3.tex
\begin{table}[htbp]
\centering
\footnotesize
\setlength{\tabcolsep}{3.5pt}
\begin{tabular}{r r r r r r }
\toprule
\(k\) & \(c_\gamma\) & \(E_{\mathrm{rep}}\) & \(\|e_u\|_{\Gamma_h}\) & Potential & GMRES \\
\midrule
1 & 0.25 & \(1.16\cdot 10^{-1}\) & \(1.11\cdot 10^{-2}\) & \(3.40\cdot 10^{-4}\) & 19 \\
1 & 0.5 & \(1.20\cdot 10^{-1}\) & \(1.15\cdot 10^{-2}\) & \(3.47\cdot 10^{-4}\) & 18 \\
1 & 1 & \(1.27\cdot 10^{-1}\) & \(1.19\cdot 10^{-2}\) & \(3.55\cdot 10^{-4}\) & 13 \\
1 & 2 & \(1.38\cdot 10^{-1}\) & \(1.24\cdot 10^{-2}\) & \(3.63\cdot 10^{-4}\) & 10 \\
1 & 4 & \(1.53\cdot 10^{-1}\) & \(1.28\cdot 10^{-2}\) & \(3.70\cdot 10^{-4}\) & 13 \\
2 & 0.25 & \(1.47\cdot 10^{-2}\) & \(3.20\cdot 10^{-3}\) & \(1.47\cdot 10^{-4}\) & 23 \\
2 & 0.5 & \(1.59\cdot 10^{-2}\) & \(3.20\cdot 10^{-3}\) & \(1.47\cdot 10^{-4}\) & 19 \\
2 & 1 & \(1.76\cdot 10^{-2}\) & \(3.20\cdot 10^{-3}\) & \(1.47\cdot 10^{-4}\) & 17 \\
2 & 2 & \(2.00\cdot 10^{-2}\) & \(3.21\cdot 10^{-3}\) & \(1.47\cdot 10^{-4}\) & 12 \\
2 & 4 & \(2.34\cdot 10^{-2}\) & \(3.21\cdot 10^{-3}\) & \(1.47\cdot 10^{-4}\) & 12 \\
3 & 0.25 & \(5.20\cdot 10^{-3}\) & \(1.14\cdot 10^{-3}\) & \(4.81\cdot 10^{-5}\) & 23 \\
3 & 0.5 & \(5.62\cdot 10^{-3}\) & \(1.14\cdot 10^{-3}\) & \(4.81\cdot 10^{-5}\) & 21 \\
3 & 1 & \(6.21\cdot 10^{-3}\) & \(1.14\cdot 10^{-3}\) & \(4.81\cdot 10^{-5}\) & 17 \\
3 & 2 & \(7.05\cdot 10^{-3}\) & \(1.14\cdot 10^{-3}\) & \(4.82\cdot 10^{-5}\) & 15 \\
3 & 4 & \(8.24\cdot 10^{-3}\) & \(1.14\cdot 10^{-3}\) & \(4.82\cdot 10^{-5}\) & 13 \\
\bottomrule
\end{tabular}
\caption{Penalty sweep on the sphere with \(N_T=320\), \(\gamma=c_\gamma/h\), and design offsets. The GMRES (generalised minimal residual method) column gives inner iterations at relative tolerance \(10^{-10}\). All listed solves converge iteratively. Potential is the relative root-mean-square error at 200 points in \(0.6\Omega\), seed zero.}
\label{tab:gamma}
\end{table}

%% file: offset_sweep_v4.tex
\begin{table}[htbp]
\centering
\footnotesize
\setlength{\tabcolsep}{3.5pt}
\begin{tabular}{r r r r r r }
\toprule
\(k\) & \(s\) & \(E_{\mathrm{rep}}\) & \(\|e_u\|_{\Gamma_h}\) & Potential & GMRES \\
\midrule
1 & 1 & \(1.27\cdot 10^{-1}\) & \(1.19\cdot 10^{-2}\) & \(3.55\cdot 10^{-4}\) & 13 \\
1 & 1.25 & \(1.23\cdot 10^{-1}\) & \(1.10\cdot 10^{-2}\) & \(1.73\cdot 10^{-4}\) & 14 \\
1 & 1.5 & \(1.22\cdot 10^{-1}\) & \(1.11\cdot 10^{-2}\) & \(7.52\cdot 10^{-5}\) & 14 \\
1 & 1.75 & \(1.21\cdot 10^{-1}\) & \(1.15\cdot 10^{-2}\) & \(2.53\cdot 10^{-5}\) & 15 \\
1 & 2 & \(1.19\cdot 10^{-1}\) & \(1.19\cdot 10^{-2}\) & \(1.28\cdot 10^{-5}\) & 16 \\
1 & 2.25 & \(1.18\cdot 10^{-1}\) & \(1.23\cdot 10^{-2}\) & \(2.09\cdot 10^{-5}\) & 16 \\
1 & 2.5 & \(1.16\cdot 10^{-1}\) & \(1.25\cdot 10^{-2}\) & \(2.56\cdot 10^{-5}\) & 17 \\
2 & 1 & \(4.66\cdot 10^{-2}\) & \(8.35\cdot 10^{-3}\) & \(4.54\cdot 10^{-4}\) & 14 \\
2 & 1.25 & \(2.86\cdot 10^{-2}\) & \(5.22\cdot 10^{-3}\) & \(2.58\cdot 10^{-4}\) & 15 \\
2 & 1.5 & \(1.76\cdot 10^{-2}\) & \(3.20\cdot 10^{-3}\) & \(1.47\cdot 10^{-4}\) & 17 \\
2 & 1.75 & \(1.11\cdot 10^{-2}\) & \(1.96\cdot 10^{-3}\) & \(8.40\cdot 10^{-5}\) & 16 \\
2 & 2 & \(7.55\cdot 10^{-3}\) & \(1.22\cdot 10^{-3}\) & \(4.83\cdot 10^{-5}\) & 17 \\
2 & 2.25 & \(5.79\cdot 10^{-3}\) & \(8.06\cdot 10^{-4}\) & \(2.81\cdot 10^{-5}\) & 20 \\
2 & 2.5 & \(5.05\cdot 10^{-3}\) & \(6.07\cdot 10^{-4}\) & \(1.69\cdot 10^{-5}\) & 19 \\
3 & 1 & \(4.64\cdot 10^{-2}\) & \(8.34\cdot 10^{-3}\) & \(4.54\cdot 10^{-4}\) & 15 \\
3 & 1.25 & \(2.83\cdot 10^{-2}\) & \(5.20\cdot 10^{-3}\) & \(2.58\cdot 10^{-4}\) & 16 \\
3 & 1.5 & \(1.71\cdot 10^{-2}\) & \(3.18\cdot 10^{-3}\) & \(1.47\cdot 10^{-4}\) & 15 \\
3 & 1.75 & \(1.03\cdot 10^{-2}\) & \(1.91\cdot 10^{-3}\) & \(8.40\cdot 10^{-5}\) & 18 \\
3 & 2 & \(6.21\cdot 10^{-3}\) & \(1.14\cdot 10^{-3}\) & \(4.81\cdot 10^{-5}\) & 17 \\
3 & 2.25 & \(3.84\cdot 10^{-3}\) & \(6.74\cdot 10^{-4}\) & \(2.78\cdot 10^{-5}\) & 20 \\
3 & 2.5 & \(2.48\cdot 10^{-3}\) & \(4.02\cdot 10^{-4}\) & \(1.64\cdot 10^{-5}\) & 21 \\
\bottomrule
\end{tabular}
\caption{Offset sweep on the sphere with \(N_T=320\), \(c_\rho=0.3\), \(c_\gamma=1\), and \(\bar\rho_{\mathrm{target}}=c_\rho h_{\mathrm{ref}}^s\); \(h_{\mathrm{ref}}\) is the unscaled diameter. The GMRES (generalised minimal residual method) column gives inner iterations at relative tolerance \(10^{-10}\). All listed solves converge iteratively. Potential is the relative root-mean-square error at 200 points in \(0.6\Omega\), seed zero.}
\label{tab:rho}
\end{table}

%% file: exterior_surrogate_v5.tex
\begin{table}[htbp]
\centering
\footnotesize
\setlength{\tabcolsep}{3.5pt}
\begin{tabular}{r r r r r r r r }
\toprule
\(k\) & \(N_T\) & \(h_{\mathrm{int}}\) & \shortstack{Interior\\\(E_{\mathrm{rep}}\)} & GMRES & \(h_{\mathrm{ext}}\) & \shortstack{Exterior\\\(E_{\mathrm{rep}}\)} & GMRES \\
\midrule
1 & 80 & 0.5280 & \(2.94\cdot 10^{-1}\) & 14 & 0.7637 & \(2.26\cdot 10^{1}\) & 48 \\
1 & 320 & 0.2967 & \(1.27\cdot 10^{-1}\) & 13 & 0.3606 & \(2.85\cdot 10^{-1}\) & 132 \\
1 & 1280 & 0.1570 & \(5.05\cdot 10^{-2}\) & 11 & 0.1733 & \(6.20\cdot 10^{-1}\) & 5000\(^{\ast}\) \\
2 & 80 & 0.5534 & \(8.43\cdot 10^{-2}\) & 16 & 0.7387 & \(2.13\cdot 10^{0}\) & 207 \\
2 & 320 & 0.3105 & \(1.76\cdot 10^{-2}\) & 17 & 0.3468 & \(6.16\cdot 10^{-2}\) & 5000\(^{\ast}\) \\
2 & 1280 & 0.1618 & \(2.91\cdot 10^{-3}\) & 15 & 0.1684 & \(8.11\cdot 10^{-3}\) & 5000\(^{\ast}\) \\
\bottomrule
\end{tabular}
\caption{Interior and exterior sphere surrogates. Exterior errors use the absolute offset weight. \(E_{\mathrm{rep}}\) is the computed Calder\'on--penalty error proxy against exact surrogate traces; \(N_T\) counts panels and \(h\) is the actual maximum diameter. The GMRES (generalised minimal residual method) column gives inner iterations at relative tolerance \(10^{-10}\); \(\ast\) marks a failed iterative solve followed by a direct solution for error measurement.}
\label{tab:outside}
\end{table}

%% file: penalty_comparison_v4.tex
\begin{table}[htbp]
\centering
\footnotesize
\setlength{\tabcolsep}{3.5pt}
\begin{tabular}{r r r r r r r r }
\toprule
\(k\) & \(N_T\) & \(h_{\mathrm{int}}\) & \shortstack{Corrected\\\(E_{\mathrm{rep}}\)} & GMRES & \(h\) & \shortstack{Half\\\(E_{\mathrm{rep}}\)} & GMRES \\
\midrule
1 & 80 & 0.5280 & \(2.94\cdot 10^{-1}\) & 14 & 0.5280 & \(5.57\cdot 10^{-1}\) & 15 \\
1 & 320 & 0.2967 & \(1.27\cdot 10^{-1}\) & 13 & 0.2967 & \(4.18\cdot 10^{-1}\) & 14 \\
1 & 1280 & 0.1570 & \(5.05\cdot 10^{-2}\) & 11 & 0.1570 & \(2.97\cdot 10^{-1}\) & 12 \\
2 & 80 & 0.5534 & \(8.43\cdot 10^{-2}\) & 16 & 0.5534 & \(3.78\cdot 10^{-1}\) & 19 \\
2 & 320 & 0.3105 & \(1.76\cdot 10^{-2}\) & 17 & 0.3105 & \(2.22\cdot 10^{-1}\) & 16 \\
\bottomrule
\end{tabular}
\caption{Corrected and deliberately inconsistent half-coefficient penalties on the sphere. \(E_{\mathrm{rep}}\) is the computed Calder\'on--penalty error proxy against exact surrogate traces; \(N_T\) counts panels and \(h\) is the actual maximum diameter. The GMRES (generalised minimal residual method) column gives inner iterations at relative tolerance \(10^{-10}\). All listed solves converge iteratively.}
\label{tab:variant}
\end{table}

%% file: conclusion_v2.tex

\section{Conclusions}
\label{sec:conclusion}

The first-order Taylor correction imposes Dirichlet data on a flat interior surrogate boundary through the existing Dirichlet and flux unknowns. Under the assumptions of Theorem~\ref{thm:main}, including \(\gamma\delta_h\le c_0<4\), the corrected formulation is stable, and the error estimate separates trace approximation from a contribution that is quadratic in the normal offset. With \(\gamma\simeq h^{-1}\), the sufficient rule \(\delta_h\lesssim h^{(k+1)/2}\) gives an error of order \(h^{k+1/2}\) for \(P_k/P_{k-1}\) spaces. In particular, an offset of order \(h^2\) supports the estimate through cubic Dirichlet approximation.

The numerical checks support the implementation on the tested meshes and show why quadrature must be matched to the polynomial degree. 
These finite checks, however, do not establish all the uniform assumptions on the mesh family. The theorem concerns interior surrogates and exact assembly; exterior geometries, unrestricted cut meshes and a priori control of quadrature errors require further analysis.

%% file: technical_results_v5.tex

\section{Supporting Estimates and Proofs}
\label{sec:appendix-coercivity}

\subsection{Additional Operator Bounds}
\label{subsec:app-operator-bounds}

We use the conventions and spaces of Section~\ref{sec:b_op}. On a fixed bounded Lipschitz domain, both layer potentials are harmonic in the interior and satisfy \cite[Lemmas 6.6 and 6.10]{Steinbach2008}
\begin{align}
\|\Psi^0_{SL}\lambda\|_{H^1(\Omega)}&\le C\|\lambda\|_{H^{-1/2}(\Gamma)}\qquad\forall\lambda\in H^{-1/2}(\Gamma) \label{eq:sing_rec_stab}\\
\|\Psi^0_{DL}w\|_{H^1(\Omega)}&\le C\|w\|_{H^{1/2}(\Gamma)}\qquad\forall w\in H^{1/2}(\Gamma) \label{eq:doub_rec_stab}
\end{align}
The boundary operators satisfy \cite[Chapter 6]{Steinbach2008}
\begin{align}
\|V\phi\|_{H^{1/2}(\Gamma)}&\le C\|\phi\|_{H^{-1/2}(\Gamma)}\qquad\forall\phi\in H^{-1/2}(\Gamma) \label{eq:stab_single}\\
\|D\phi\|_{H^{-1/2}(\Gamma)}&\le C\|\phi\|_{H^{1/2}(\Gamma)}\qquad\forall\phi\in H^{1/2}(\Gamma) \label{eq:stab_hyper}\\
\|K'\phi\|_{H^{-1/2}(\Gamma)}&\le C\|\phi\|_{H^{-1/2}(\Gamma)}\qquad\forall\phi\in H^{-1/2}(\Gamma) \label{eq:stab_adjoint}\\
\|K\phi\|_{H^{1/2}(\Gamma)}&\le C\|\phi\|_{H^{1/2}(\Gamma)}\qquad\forall\phi\in H^{1/2}(\Gamma) \label{eq:stab_double}
\end{align}
These estimates hold on a fixed surface; uniform bounds for the family of surrogate surfaces are assumed in Section~\ref{subsec:standing-assumptions}.

For completeness, the full Calder\'on form corresponding to \eqref{eq:mult_trace} is
\begin{align}
\mathcal C_\Gamma[(\lambda,w),(\mu,v)]&:=\langle(\tfrac12I-K)w,\mu\rangle_\Gamma+\langle V\lambda,\mu\rangle_\Gamma \label{eq:compact_form}\\
&\quad+\langle(\tfrac12I+K')\lambda,v\rangle_\Gamma+\langle Dw,v\rangle_\Gamma \notag
\end{align}
For exact harmonic traces \(\lambda=\gamma_Nu\), \(w=\gamma_Du\), it satisfies
\begin{equation}
\mathcal C_\Gamma[(\lambda,w),(\mu,v)]=\langle w,\mu\rangle_\Gamma+\langle\lambda,v\rangle_\Gamma \label{eq:realtion}
\end{equation}
The operator bounds and trace duality imply
\begin{equation}
|\mathcal C_\Gamma[(\lambda,w),(\mu,v)]|\le C\tnorm{(\lambda,w)}_{\mathcal V_\Gamma}\tnorm{(\mu,v)}_{\mathcal V_\Gamma} \label{eq:cont_cald}
\end{equation}
Adjointness cancels the off-diagonal terms of \(\tilde{\mathcal C}_\Gamma\). In three dimensions, \eqref{eq:Vellipt}--\eqref{eq:Dellipt} consequently give
\begin{equation}
\tilde{\mathcal C}_\Gamma[(\lambda,w),(\lambda,w)]\ge\alpha\tnorm{(\lambda,w)}_{\mathcal V_\Gamma}^2\qquad\forall(\lambda,w)\in H^{-1/2}(\Gamma)\times H_*^{1/2}(\Gamma) \label{eq:coerciv_cald}
\end{equation}
for some \(\alpha>0\).

\subsection{Continuity and Coercivity of the Corrected Form}
\label{subsec:app-stability}\label{par:app-setting}

We use the standing assumptions in Section~\ref{subsec:standing-assumptions}, the form \(\mathcal A_h\) in \eqref{eq:abstract_form_Dir}, and the norm in \eqref{eq:Bnorm_def}. The offset may vary over \(\Gamma_h\) and may vanish on subsets of the surface.

\begin{lem}[Continuity]\label{lem:app-continuity}
There is \(M>0\), independent of \(h\), such that
\begin{equation}
|\mathcal A_h((\lambda,w),(\mu,v))|\le M\tnorm{(\lambda,w)}_{\mathcal B}\tnorm{(\mu,v)}_{\mathcal B}\qquad\forall(\lambda,w),(\mu,v)\in\tilde{\mathcal V}_{\Gamma_h} \label{eq:app-Ah-cont}
\end{equation}
\end{lem}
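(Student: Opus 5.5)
We bound \(\mathcal A_h\) term by term, splitting it into the Calder\'on part \(\tilde{\mathcal C}_{\Gamma_h}\) from \eqref{eq:mult_trace} and the penalty part \(\mathcal B_D\) from \eqref{eq:operator_Dirichlet}. Each term will be estimated by a product of two of the four components of \(\tnorm{\cdot}_{\mathcal B}\) in \eqref{eq:Bnorm_def}. Every such product is bounded by \(\tnorm{(\lambda,w)}_{\mathcal B}\tnorm{(\mu,v)}_{\mathcal B}\), so the constant \(M\) is simply the sum of the individual constants.

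\textbf{Calder\'on terms.} For the four operator terms we use the duality pairing between \(H^{-1/2}(\Gamma_h)\) and \(H^{1/2}(\Gamma_h)\), together with the mapping bounds \eqref{eq:stab_single}--\eqref{eq:stab_double}. By the standing assumptions of Section~\ref{subsec:standing-assumptions}, these bounds hold with constants uniform in \(h\). This gives
\[
|\langle Kw,\mu\rangle_{\Gamma_h}|\le C\|w\|_{H^{1/2}}\|\mu\|_{H^{-1/2}},\qquad |\langle V\lambda,\mu\rangle_{\Gamma_h}|\le C\|\lambda\|_{H^{-1/2}}\|\mu\|_{H^{-1/2}},
\]
and analogous bounds for \(\langle K'\lambda,v\rangle_{\Gamma_h}\) and \(\langle Dw,v\rangle_{\Gamma_h}\). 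The pairs lie in \(\tilde{\mathcal V}_{\Gamma_h}\subset\mathcal V_{\Gamma_h}\), so all these pairings are defined. The identity-type terms \(\tfrac12\langle w,\mu\rangle_{\Gamma_h}\) and \(-\tfrac12\langle\lambda,v\rangle_{\Gamma_h}\) in \(\mathcal B_D\) are handled by the same duality bound, \(|\langle\lambda,v\rangle_{\Gamma_h}|\le\|\lambda\|_{H^{-1/2}}\|v\|_{H^{1/2}}\).

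\textbf{Penalty terms.} There are three. First, by Cauchy--Schwarz, \(|\langle\gamma w,v\rangle_{\Gamma_h}|\le\|\gamma^{1/2}w\|_{\Gamma_h}\|\gamma^{1/2}v\|_{\Gamma_h}\). Second, since \(\rho\ge0\), writing \(\rho=\rho^{1/2}\rho^{1/2}\) gives \(|\langle\rho\lambda,\mu\rangle_{\Gamma_h}|\le\|\rho^{1/2}\lambda\|_{\Gamma_h}\|\rho^{1/2}\mu\|_{\Gamma_h}\). Third, the mixed term \(\langle\gamma\rho\lambda,v\rangle_{\Gamma_h}\) is bounded by \eqref{eq:app-mixed-CS}, using \(\rho\le\delta_h\) and \(\gamma\delta_h\le c_0\) from \eqref{eq:penalty_scaling}:
\[
|\langle\gamma\rho\lambda,v\rangle_{\Gamma_h}|\le\sqrt{c_0}\,\|\rho^{1/2}\lambda\|_{\Gamma_h}\|\gamma^{1/2}v\|_{\Gamma_h}.
\]
Summing all the bounds proves \eqref{eq:app-Ah-cont} with \(M\) depending only on the uniform operator constants and on \(c_0\).

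\textbf{Main obstacle.} The main difficulty is the mixed penalty term. Without the weighting by \(\rho^{1/2}\) and \(\gamma^{1/2}\), it could not be controlled by the norm \(\tnorm{\cdot}_{\mathcal B}\) on the non-discrete space \(\tilde{\mathcal V}_{\Gamma_h}\), where no inverse estimate is available. The assumption \(\gamma\delta_h\le c_0\) is exactly what makes the weighted bound above uniform in \(h\). Once that is in place, the rest reduces to ensuring the operator constants are uniform over the surrogate family, which is assumed rather than proved.
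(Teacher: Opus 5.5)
Your proposal is correct and follows essentially the same route as the paper: the four operator terms and the two identity pairings are bounded by the uniform mapping bounds and $H^{-1/2}$--$H^{1/2}$ duality, the $\gamma$- and $\rho$-weighted terms by Cauchy--Schwarz, and the mixed term by \eqref{eq:app-mixed-CS}. You also make explicit the use of $\gamma\delta_h\le c_0$ from \eqref{eq:penalty_scaling} to keep the factor $\sqrt{\gamma\delta_h}$ uniform in $h$, which the paper leaves implicit.
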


\begin{proof}
The uniform operator bounds and trace duality give
\begin{align}
|\langle V\lambda,\mu\rangle_{\Gamma_h}|&\le C\|\lambda\|_{H^{-1/2}(\Gamma_h)}\|\mu\|_{H^{-1/2}(\Gamma_h)} \label{eq:app-V-bnd}\\
|\langle Dw,v\rangle_{\Gamma_h}|&\le C\|w\|_{H^{1/2}(\Gamma_h)}\|v\|_{H^{1/2}(\Gamma_h)} \label{eq:app-D-bnd}\\
|\langle Kw,\mu\rangle_{\Gamma_h}|&\le C\|w\|_{H^{1/2}(\Gamma_h)}\|\mu\|_{H^{-1/2}(\Gamma_h)} \label{eq:app-K-bnd}\\
|\langle K'\lambda,v\rangle_{\Gamma_h}|&\le C\|\lambda\|_{H^{-1/2}(\Gamma_h)}\|v\|_{H^{1/2}(\Gamma_h)} \label{eq:app-Kp-bnd}\\
|\langle\lambda,v\rangle_{\Gamma_h}|&\le C\|\lambda\|_{H^{-1/2}(\Gamma_h)}\|v\|_{H^{1/2}(\Gamma_h)} \label{eq:app-L2pair1}\\
|\langle w,\mu\rangle_{\Gamma_h}|&\le C\|w\|_{H^{1/2}(\Gamma_h)}\|\mu\|_{H^{-1/2}(\Gamma_h)} \label{eq:app-L2pair2}
\end{align}
For the weighted terms, Cauchy--Schwarz yields
\begin{align}
|\langle\gamma w,v\rangle_{\Gamma_h}|&\le\|\gamma^{1/2}w\|_{\Gamma_h}\|\gamma^{1/2}v\|_{\Gamma_h} \label{eq:app-gamma-CS}\\
|\langle\rho\lambda,\mu\rangle_{\Gamma_h}|&\le\|\rho^{1/2}\lambda\|_{\Gamma_h}\|\rho^{1/2}\mu\|_{\Gamma_h} \label{eq:app-rho-CS}
\end{align}
Together with the mixed-term estimate \eqref{eq:app-mixed-CS}, these bounds prove \eqref{eq:app-Ah-cont} for the variable offset.
\end{proof}

\begin{lem}[Coercivity]\label{lem:app-coercivity}
If \(\gamma\delta_h\le c_0<4\), then there is \(\alpha_{\mathcal B}>0\), independent of \(h\), such that
\begin{equation}
\mathcal A_h((\mu,v),(\mu,v))\ge\alpha_{\mathcal B}\tnorm{(\mu,v)}_{\mathcal B}^2\qquad\forall(\mu,v)\in\tilde{\mathcal V}_{\Gamma_h} \label{eq:app-Ah-coerc}
\end{equation}
\end{lem}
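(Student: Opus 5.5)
The plan is to start from the energy identity and the lower bound \eqref{eq:app-lower-pre} from Section~\ref{subsec:stability}, and then convert the operator terms into Sobolev norms using the uniform bounds \eqref{eq:uniform-v}--\eqref{eq:uniform-d}.

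First, take the diagonal of \(\tilde{\mathcal C}_{\Gamma_h}\), which reduces to \eqref{eq:app-tildeC-energy} by adjointness of \(K\) and \(K'\). The penalty part gives \eqref{eq:app-BD-energy}. For the mixed term, use the pointwise bound \(\gamma\rho\le\gamma\delta_h\le c_0\) to get
\[
|\langle\gamma\rho\mu,v\rangle_{\Gamma_h}|=|\langle(\gamma\rho)^{1/2}\rho^{1/2}\mu,\gamma^{1/2}v\rangle_{\Gamma_h}|\le\sqrt{c_0}\,ab .
\]
Young's inequality then yields \eqref{eq:app-young}, and hence \eqref{eq:app-lower-pre} with \(\beta=1-\sqrt{c_0}/2>0\). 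This is exactly where \(c_0<4\) is used.

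Second, split \(\beta a^2\) into two halves. Keep \(\tfrac{\beta}{2}\|\gamma^{1/2}v\|^2_{\Gamma_h}\) in the final bound. For the other half, use \(\gamma\ge c_\gamma^-h^{-1}\ge c_\gamma^-/h_0=:\gamma_0\), so it dominates \(\tfrac{\beta\gamma_0}{2}\|v\|_{\Gamma_h}^2\). Set \(\kappa=\min\{1,\beta\gamma_0/2\}\). Then
\[
\langle Dv,v\rangle_{\Gamma_h}+\tfrac{\beta\gamma_0}{2}\|v\|_{\Gamma_h}^2\ge\kappa\big(\langle Dv,v\rangle_{\Gamma_h}+\|v\|_{\Gamma_h}^2\big)\ge\kappa c_D\|v\|_{H^{1/2}(\Gamma_h)}^2 ,
\]
using \eqref{eq:uniform-d} and \(D\ge0\). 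Together with \eqref{eq:uniform-v} for the flux, this gives a lower bound by
\[
\min\{c_V,\kappa c_D,\beta/2\}\big(\|\mu\|^2_{H^{-1/2}}+\|v\|^2_{H^{1/2}}+\|\gamma^{1/2}v\|^2+\|\rho^{1/2}\mu\|^2\big).
\]
Finally, \((\sum_{i=1}^4 x_i)^2\le4\sum x_i^2\) gives \eqref{eq:app-Ah-coerc} with \(\alpha_{\mathcal B}\) equal to a quarter of this minimum, which is independent of \(h\).

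The main obstacle is justifying \eqref{eq:uniform-d} uniformly in \(h\), since the statement refers to this lemma for it. I would write \(v=\bar v+v_0\), where \(\bar v=|\Gamma_h|^{-1}\int_{\Gamma_h}v\,dS\) and \(v_0\in H^{1/2}_*(\Gamma_h)\). Because \(D\) annihilates constants and is symmetric, \(\langle Dv,v\rangle_{\Gamma_h}=\langle Dv_0,v_0\rangle_{\Gamma_h}\ge c_2^D\|v_0\|^2_{H^{1/2}}\), with uniform \(c_2^D\) by assumption. By \(L^2\)-orthogonality of the mean, \(\|v\|^2_{\Gamma_h}=|\Gamma_h|\bar v^2+\|v_0\|^2_{\Gamma_h}\ge|\Gamma_h|\bar v^2\). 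Then
\[
\|v\|_{H^{1/2}}\le\|v_0\|_{H^{1/2}}+|\bar v|\,\|1\|_{H^{1/2}(\Gamma_h)} ,
\]
and \(\|1\|_{H^{1/2}(\Gamma_h)}\) is uniformly bounded thanks to the assumed uniform bounds on \(|\Gamma_h|\) and the uniform norm equivalence (the Slobodeckij seminorm of a constant vanishes). Squaring and combining gives \(c_D\) depending only on \(c_2^D\) and the upper and lower area bounds.

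The case \(\rho=0\) on subsets is harmless, since the \(b\)-term is simply kept as it stands.
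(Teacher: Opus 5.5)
Your proposal is correct and follows essentially the same route as the paper. Both arguments start from the energy lower bound \eqref{eq:app-lower-pre} with \(\beta=1-\sqrt{c_0}/2\), separate the mean of \(v\) to handle the kernel of \(D\) and establish \eqref{eq:uniform-d}, use a fixed portion of \(\beta a^2\) together with \(\gamma\ge c_\gamma^-/h_0\) to control \(\|v\|_{\Gamma_h}\), and finish with the equivalence of the sum of squares and the square of the sum. Your version differs only in making the constants explicit, for example \(\alpha_{\mathcal B}=\tfrac14\min\{c_V,\kappa c_D,\beta/2\}\), where the paper leaves them implicit.
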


\begin{proof}
Use the energy lower bound \eqref{eq:app-lower-pre}, with \(a\), \(b\), and \(\beta\) as in Section~\ref{subsec:stability}.
To handle the kernel of \(D\), put \(\bar v=|\Gamma_h|^{-1}\int_{\Gamma_h}v\,dS\). Since \(D\) annihilates globally constant functions and is symmetric, the mean-zero coercivity estimate gives
\begin{equation}
\langle Dv,v\rangle_{\Gamma_h}=\langle D(v-\bar v),v-\bar v\rangle_{\Gamma_h}\ge c_D'\|v-\bar v\|_{H^{1/2}(\Gamma_h)}^2 \label{eq:app-VD-coerc}
\end{equation}
The mean is controlled by \(\|v\|_{\Gamma_h}\), proving \eqref{eq:uniform-d}. Because \(\gamma\ge c_\gamma^-/h_0>0\), a fixed portion of \(\beta a^2\) controls that \(L^2\) norm uniformly. Combining \eqref{eq:uniform-v}, \eqref{eq:uniform-d}, and \eqref{eq:app-lower-pre} therefore controls the sum of the squares of all four terms in \eqref{eq:Bnorm_def}. This is equivalent to the square of their sum, proving the lemma.
\end{proof}

\begin{rem}[Dimensional scope]\label{rem:app-2d}
The main result concerns bounded domains in \(\mathbb R^3\) with connected boundary. The two-dimensional Green's function and trace identities in Section~\ref{sec:b_op} remain valid, but the single-layer operator need not be positive for the stated logarithmic kernel on an arbitrary domain. A two-dimensional extension requires a normalisation that ensures single-layer coercivity, together with analogous uniform assumptions. The mean-zero restriction used for the hypersingular operator does not provide that coercivity. The theorem therefore does not cover arbitrary planar domains.
\end{rem}

\subsection{A Surface Inverse Estimate}
\label{subsec:app-inverse}

\begin{lem}[Discrete flux inverse estimate]\label{lem:app-inverse}
Under the mesh and penalty assumptions in Section~\ref{subsec:standing-assumptions},
\begin{equation}
\|\gamma^{-1/2}\mu_h\|_{\Gamma_h}\le C\|\mu_h\|_{H^{-1/2}(\Gamma_h)}\qquad\forall\mu_h\in\Lambda_{k-1}(\Gamma_h) \label{eq:inverse_ineq}
\end{equation}
\end{lem}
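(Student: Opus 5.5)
Since \(\gamma\simeq h^{-1}\) by \eqref{eq:penalty_scaling}, the statement is equivalent to the standard negative-norm inverse estimate \(h^{1/2}\|\mu_h\|_{\Gamma_h}\le C\|\mu_h\|_{H^{-1/2}(\Gamma_h)}\) for discontinuous piecewise polynomials on a shape-regular, quasiuniform mesh. Indeed, \(\|\gamma^{-1/2}\mu_h\|_{\Gamma_h}\le (c_\gamma^-)^{-1/2}h^{1/2}\|\mu_h\|_{\Gamma_h}\), so only the \(h\)-scaled estimate has to be shown, with a constant depending on \(k\), shape regularity, quasiuniformity and the uniform Sobolev-norm equivalences assumed for the family \(\Gamma_h\).

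I will argue by duality,
\[
\|\mu_h\|_{H^{-1/2}(\Gamma_h)}=\sup_{0\ne v\in H^{1/2}(\Gamma_h)}\frac{\langle\mu_h,v\rangle_{\Gamma_h}}{\|v\|_{H^{1/2}(\Gamma_h)}},
\]
and test with \(v=v_h\in S_{k+2}(\Gamma_h)\), or simply with \(v_h=\sum_T b_T\mu_h|_T\). Here \(b_T\) is the cubic bubble on panel \(T\) (product of barycentric coordinates, scaled to have maximum one). This \(v_h\) is continuous, vanishes on panel boundaries, and lies in \(H^1(\Gamma_h)\). By norm equivalence on the reference panel, transported by affine maps:
\begin{itemize}
\item \(\langle\mu_h,v_h\rangle_{\Gamma_h}=\sum_T\int_T b_T\mu_h^2\ge c\|\mu_h\|_{\Gamma_h}^2\);
\item \(\|v_h\|_{\Gamma_h}\le\|\mu_h\|_{\Gamma_h}\);
\item \(|v_h|_{H^1(T)}\le Ch_T^{-1}\|\mu_h\|_{T}\).
\end{itemize}

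The main step is bounding \(\|v_h\|_{H^{1/2}(\Gamma_h)}\le Ch^{-1/2}\|\mu_h\|_{\Gamma_h}\). I would use interpolation,
\[
\|v_h\|_{H^{1/2}}\le C\|v_h\|_{\Gamma_h}^{1/2}\|v_h\|_{H^1(\Gamma_h)}^{1/2},
\]
valid on Lipschitz surfaces with constants uniform under the assumed uniform norm equivalence for Lipschitz parametrisations. Since \(v_h\) is continuous and piecewise \(H^1\) it is globally \(H^1\). Quasiuniformity then gives \(\|v_h\|_{H^1(\Gamma_h)}\le Ch^{-1}\|\mu_h\|_{\Gamma_h}\), using \(h\le h_0\) to absorb the \(L^2\) part, and hence \(\|v_h\|_{H^{1/2}}\le Ch^{-1/2}\|\mu_h\|_{\Gamma_h}\).

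Combining the pieces, \(\|\mu_h\|_{H^{-1/2}}\ge c\|\mu_h\|_{\Gamma_h}^2/(Ch^{-1/2}\|\mu_h\|_{\Gamma_h})=c'h^{1/2}\|\mu_h\|_{\Gamma_h}\), which together with \(\gamma\ge c_\gamma^-h^{-1}\) yields \eqref{eq:inverse_ineq}. The obstacle I expect is keeping the interpolation constant uniform in \(h\) over the varying polyhedral surfaces. I would handle this by the standing assumption of uniform Lipschitz parametrisation equivalence, which transfers the inequality from a fixed reference surface. Quasiuniformity is essential, since otherwise only a local (\(h_T\)-weighted) version holds.
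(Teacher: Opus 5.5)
Your proposal is correct and follows essentially the same route as the paper. Both arguments put the panelwise bubble \(b_T\mu_h\) into the dual norm, use polynomial norm equivalence for the pairing and for the \(L^2\)/\(H^1\) bounds, interpolate to get \(\|v_h\|_{H^{1/2}(\Gamma_h)}\le Ch^{-1/2}\|\mu_h\|_{\Gamma_h}\), and conclude with \(\gamma\ge c_\gamma^-h^{-1}\); the paper's extra factor \(h\) in the test function does not matter, because the dual-norm quotient is invariant under scaling of the test function.
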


\begin{proof}
We adapt the localisation and duality argument of N\'ed\'elec \cite[Lemma~4, pp.~68--69]{Nedelec1976}. In the dual norm, use the test function that equals \(h b_T\mu_h\) on each panel, where \(b_T\) is a fixed polynomial bubble that is positive in the panel interior and vanishes on its edges. This test function is conforming. Polynomial norm equivalence bounds the pairing below by \(ch\|\mu_h\|_{\Gamma_h}^2\).

The test function has \(L^2\) norm at most \(Ch\|\mu_h\|_{\Gamma_h}\) and \(H^1\) norm at most \(C\|\mu_h\|_{\Gamma_h}\). Interpolation therefore bounds its \(H^{1/2}\) norm by \(Ch^{1/2}\|\mu_h\|_{\Gamma_h}\). The quotient in the dual norm then gives \(h^{1/2}\|\mu_h\|_{\Gamma_h}\le C\|\mu_h\|_{H^{-1/2}(\Gamma_h)}\), and \eqref{eq:penalty_scaling} yields \eqref{eq:inverse_ineq}.
\end{proof}

\subsection{Alternative Dirichlet Penalty}
\label{subsec:app-variants}\label{par:dirichlet-equivalence_appendix}

For the exact traces on the surrogate, the Calder\'on identity and \(g_h=u\circ p\) give the shifted boundary identity
\begin{equation}
\tilde{\mathcal C}_{\Gamma_h}[(\lambda^\Gamma,u^\Gamma),(\mu,v)]-\tfrac12\langle\lambda^\Gamma,v\rangle_{\Gamma_h}-\tfrac12\langle u^\Gamma,\mu\rangle_{\Gamma_h}=\langle g_h-u\circ p,\gamma v+\mu\rangle_{\Gamma_h} \label{eq:semi_disc}
\end{equation}
Both penalty variants replace \(u\circ p\) by \(w+\rho\lambda\); they differ in the test trace used for the penalty.

Testing with the corrected trace gives the alternative penalty
\begin{align}
\mathcal B_D^{(2)}[(\lambda,w),(\mu,v)]
&:=-\tfrac12\langle\lambda,v\rangle_{\Gamma_h}+\tfrac12\langle w,\mu\rangle_{\Gamma_h} \label{eq:Bform2}\\
&\quad+\langle\gamma(w+\rho\lambda),v+\rho\mu\rangle_{\Gamma_h}+\langle\rho\lambda,\mu\rangle_{\Gamma_h} \notag
\end{align}
with load
\begin{equation}
\mathcal L_D^{(2)}(\mu,v):=\langle g_h,\gamma v+(1+\gamma\rho)\mu\rangle_{\Gamma_h} \label{eq:LDir2}
\end{equation}
Let \(\mathcal A_h^{(1)}:=\mathcal A_h\), \(\mathcal L_h^{(1)}:=\mathcal L_D\), and let \(\mathcal A_h^{(2)}:=\tilde{\mathcal C}_{\Gamma_h}+\mathcal B_D^{(2)}\), \(\mathcal L_h^{(2)}:=\mathcal L_D^{(2)}\). For the exact traces \(x=(\lambda^\Gamma,u^\Gamma)\), define
\begin{equation}
\mathcal R_h^{(i)}(z):=\mathcal A_h^{(i)}(x,z)-\mathcal L_h^{(i)}(z),\qquad i\in\{1,2\} \label{eq:resid-def}
\end{equation}

\begin{lem}[Residual comparison and alternative stability]\label{lem:dirichlet-variants}
Under the standing assumptions, the alternative form is continuous and coercive in \(\tnorm{\cdot}_{\mathcal B}\). If \(u\in H^3(\Omega)\) is the exact solution, then
\begin{equation}
\sup_{0\ne z\in\tilde{\mathcal V}_{\Gamma_h}}\frac{|\mathcal R_h^{(1)}(z)-\mathcal R_h^{(2)}(z)|}{\tnorm{z}_{\mathcal B}}\le C\gamma^{1/2}\delta_h^2\|u\|_{H^3(\Omega)} \label{eq:delta-bound-App}
\end{equation}
and each residual is bounded by the same right-hand side in the dual norm on \(\mathcal V_h\).
\end{lem}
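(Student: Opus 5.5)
The plan is to compare the two formulations term by term. By construction, \(\mathcal A_h^{(2)}-\mathcal A_h^{(1)}\) and \(\mathcal L_h^{(2)}-\mathcal L_h^{(1)}\) are each a single extra term:
\begin{equation*}
\mathcal A_h^{(2)}((\lambda,w),(\mu,v))-\mathcal A_h^{(1)}((\lambda,w),(\mu,v))=\langle\gamma(w+\rho\lambda),\rho\mu\rangle_{\Gamma_h},\qquad
\mathcal L_h^{(2)}(\mu,v)-\mathcal L_h^{(1)}(\mu,v)=\langle g_h,\gamma\rho\mu\rangle_{\Gamma_h}.
\end{equation*}
All three claims (continuity, coercivity, residual comparison) will be reduced to estimates for these extra terms. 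The tool throughout is the weighted Cauchy--Schwarz step already used in \eqref{eq:app-mixed-CS}. It rests on \(|\gamma\rho|\le\sqrt{\gamma\delta_h}\,\gamma^{1/2}\rho^{1/2}\) pointwise, with \(\gamma\delta_h\le c_0\).

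\emph{Continuity.} The new term splits as \(\langle\gamma w,\rho\mu\rangle_{\Gamma_h}+\langle\gamma\rho\lambda,\rho\mu\rangle_{\Gamma_h}\). The first piece is bounded by \(\sqrt{c_0}\,\|\gamma^{1/2}w\|_{\Gamma_h}\|\rho^{1/2}\mu\|_{\Gamma_h}\). The second is bounded by \(\gamma\delta_h\|\rho^{1/2}\lambda\|_{\Gamma_h}\|\rho^{1/2}\mu\|_{\Gamma_h}\le c_0\|\rho^{1/2}\lambda\|_{\Gamma_h}\|\rho^{1/2}\mu\|_{\Gamma_h}\). Adding these to Lemma~\ref{lem:app-continuity} gives continuity of \(\mathcal A_h^{(2)}\) with a constant \(M^{(2)}\) depending only on \(M\) and \(c_0\).

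\emph{Coercivity.} Testing with \(z=(\mu,v)\), the skew terms \(\mp\tfrac12\) cancel, and \eqref{eq:app-tildeC-energy} gives
\begin{equation*}
\mathcal A_h^{(2)}(z,z)=\langle V\mu,\mu\rangle_{\Gamma_h}+\langle Dv,v\rangle_{\Gamma_h}+\|\gamma^{1/2}(v+\rho\mu)\|_{\Gamma_h}^2+b^2,
\end{equation*}
with \(b=\|\rho^{1/2}\mu\|_{\Gamma_h}\) and \(a=\|\gamma^{1/2}v\|_{\Gamma_h}\) as in Section~\ref{subsec:stability}. The penalty term is now a perfect square, so the only task is to recover \(a\). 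The triangle inequality and the pointwise bound give \(a\le\|\gamma^{1/2}(v+\rho\mu)\|_{\Gamma_h}+\sqrt{c_0}\,b\), hence \(a^2\le2\|\gamma^{1/2}(v+\rho\mu)\|_{\Gamma_h}^2+2c_0b^2\). Choosing \(\epsilon=\min\{1/2,1/(4c_0)\}\) yields \(\mathcal A_h^{(2)}(z,z)\ge\langle V\mu,\mu\rangle_{\Gamma_h}+\langle Dv,v\rangle_{\Gamma_h}+\epsilon a^2+\tfrac12b^2\). From there the argument of Lemma~\ref{lem:app-coercivity} applies verbatim: \eqref{eq:uniform-v}, \eqref{eq:uniform-d}, and \(\gamma\ge c_\gamma^-/h_0\) to absorb the \(L^2\) mean. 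This step is actually less delicate than for \(\mathcal A_h^{(1)}\), since it needs no restriction \(c_0<4\).

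\emph{Residuals.} Subtracting the two definitions in \eqref{eq:resid-def} gives
\begin{equation*}
\mathcal R_h^{(2)}(z)-\mathcal R_h^{(1)}(z)=\langle\gamma(u^\Gamma+\rho\lambda^\Gamma-g_h),\rho\mu\rangle_{\Gamma_h}=-\langle\gamma r_h,\rho\mu\rangle_{\Gamma_h},
\end{equation*}
where the last equality uses \eqref{eq:Taylor}. Then \(|\langle\gamma r_h,\rho\mu\rangle_{\Gamma_h}|\le\sqrt{c_0}\,\gamma^{1/2}\|r_h\|_{\Gamma_h}\|\rho^{1/2}\mu\|_{\Gamma_h}\le\sqrt{c_0}\,\gamma^{1/2}\|r_h\|_{\Gamma_h}\tnorm{z}_{\mathcal B}\). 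Inserting \eqref{eq:gamma_term}--\eqref{eq:bhL_est} proves \eqref{eq:delta-bound-App} over all of \(\tilde{\mathcal V}_{\Gamma_h}\), without any inverse estimate.

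For the individual residuals, the proof of Lemma~\ref{lem:galortho} gives \(\mathcal R_h^{(1)}(z)=-\langle r_h,\gamma v+\mu\rangle_{\Gamma_h}\). On \(\mathcal V_h\) this is bounded by \eqref{eq:cs_block}--\eqref{eq:discrete-residual-bound}; this is the one place where Lemma~\ref{lem:app-inverse} is needed to control \(\|\gamma^{-1/2}\mu_h\|_{\Gamma_h}\). The bound for \(\mathcal R_h^{(2)}\) then follows from the triangle inequality with the difference estimate above.

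The only point needing care is the coercivity step of recovering \(\|\gamma^{1/2}v\|_{\Gamma_h}\) from the corrected square, which is handled by the \(\epsilon\)-weighting above. Everything else is bookkeeping with the same constants as in the principal variant. Consequently, Proposition~\ref{prop:best_approximation} and Theorem~\ref{thm:main} carry over to \(\mathcal A_h^{(2)}\), as claimed in Remark~\ref{rem:two-dirichlet}.
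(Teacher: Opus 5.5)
Your proposal is correct and follows the paper's proof essentially step by step. It uses the same form and load differences, the same residual difference \(\langle\gamma\rho r_h,\mu\rangle_{\Gamma_h}\) bounded by weighted Cauchy--Schwarz, the same energy identity with the triangle inequality to recover \(\|\gamma^{1/2}v\|_{\Gamma_h}\), and the same appeal to the discrete residual bound for \(\mathcal R_h^{(1)}\) on \(\mathcal V_h\); your explicit \(\epsilon\)-weighting simply makes quantitative the paper's statement that the penalty terms control the separate weighted norms when \(\gamma\delta_h\) is bounded.
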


\begin{proof}
Direct subtraction of the two forms and loads gives, for \(z=(\mu,v)\),
\begin{align}
(\mathcal A_h^{(2)}-\mathcal A_h^{(1)})((\lambda,w),z)&=\langle\gamma\rho(w+\rho\lambda),\mu\rangle_{\Gamma_h} \label{eq:variant-form-difference}\\
(\mathcal L_h^{(2)}-\mathcal L_h^{(1)})(z)&=\langle\gamma\rho g_h,\mu\rangle_{\Gamma_h} \label{eq:variant-load-difference}\\
\mathcal R_h^{(1)}(z)&=-\langle r_h,\gamma v+\mu\rangle_{\Gamma_h} \label{eq:variant-residual-one}\\
\mathcal R_h^{(2)}(z)&=-\langle r_h,\gamma v+(1+\gamma\rho)\mu\rangle_{\Gamma_h} \label{eq:variant-residual-two}\\
\mathcal R_h^{(1)}(z)-\mathcal R_h^{(2)}(z)&=\langle\gamma\rho r_h,\mu\rangle_{\Gamma_h} \label{eq:variant-residual-difference}
\end{align}
Cauchy--Schwarz in the weighted norms gives
\begin{align}
|\mathcal R_h^{(1)}(z)-\mathcal R_h^{(2)}(z)|&\le\sqrt{\gamma\delta_h}\,\gamma^{1/2}\|r_h\|_{\Gamma_h}\|\rho^{1/2}\mu\|_{\Gamma_h} \label{eq:delta-CS}\\
&\le C\gamma^{1/2}\delta_h^2\|u\|_{H^3(\Omega)}\tnorm{z}_{\mathcal B} \label{eq:delta-final}
\end{align}
This proves \eqref{eq:delta-bound-App}. On \(\mathcal V_h\), \eqref{eq:discrete-residual-bound} bounds \(\mathcal R_h^{(1)}\), and the difference estimate bounds \(\mathcal R_h^{(2)}\).

The energy identity for the alternative form is
\begin{equation}
\mathcal A_h^{(2)}((\mu,v),(\mu,v))=\langle V\mu,\mu\rangle_{\Gamma_h}+\langle Dv,v\rangle_{\Gamma_h}+\|\gamma^{1/2}(v+\rho\mu)\|_{\Gamma_h}^2+\|\rho^{1/2}\mu\|_{\Gamma_h}^2 \label{eq:variant-two-energy}
\end{equation}
Since
\begin{equation}
\|\gamma^{1/2}v\|_{\Gamma_h}\le\|\gamma^{1/2}(v+\rho\mu)\|_{\Gamma_h}+\sqrt{\gamma\delta_h}\|\rho^{1/2}\mu\|_{\Gamma_h} \label{eq:variant-two-norm}
\end{equation}
these penalty terms control the separate weighted norms when \(\gamma\delta_h\) is bounded. The argument for the constant mode in Lemma~\ref{lem:app-coercivity} then proves coercivity. For continuity, combine Lemma~\ref{lem:app-continuity} with a Cauchy--Schwarz bound on \eqref{eq:variant-form-difference} in the same weighted norms. A finite upper bound on \(\gamma\delta_h\) suffices for this alternative, whereas the first variant uses the sufficient condition \(c_0<4\).
\end{proof}

\begin{rem}[Alternative Dirichlet variant]\label{rem:alt-variant}
Lemma~\ref{lem:dirichlet-variants} establishes continuity, coercivity, and the discrete residual bound for \eqref{eq:Bform2}--\eqref{eq:LDir2}. The proof of Theorem~\ref{thm:main} therefore applies to that variant as well, with possibly different constants.
\end{rem}